\theoremstyle{definition}
\newtheorem{remark}{Remark}
\newtheorem{lemma}{Lemma}
\newtheorem{theorem}{Theorem}
\let\eps=\varepsilon
\let\le=\leqslant
\let\ge=\geqslant
\let\leq=\leqslant
\let\geq=\geqslant
\def\i{\mathbf{i}}  \def\e{\mathbf{e}}
\def\P{\mathcal{P}}
\def\S{\mathcal{S}} \def\X{\mathcal{X}}  \def\Z{\mathcal{Z}}
\def\O{\mathcal{O}}
\def\vec{\mathop{\mathrm{vec}}\nolimits}
\def\als{\mathop{\mathrm{ALS}}\nolimits}
\def\const{\mathop{\mathrm{const}}\nolimits}
\def\Span{\mathop{\mathrm{span}}\nolimits}
\def\trans{*}
\def\tru{\star}
\def\lmax{\lambda_{\mathrm{max}}}
\def\lmin{\lambda_{\mathrm{min}}}
\def\R{\mathbb{R}} 
\def\step{h}
\begin{document}
\author{Sergey V. Dolgov%
\thanks{Max-Planck-Institut f{\"u}r Mathematik in den Naturwissenschaften, Inselstr. 22-26, D-04103 Leipzig, Germany, 
and Institute of Numerical Mathematics, Russian Academy of Sciences, Gubkina str. 8, 119333 Moscow, Russia ({\tt sergey.v.dolgov@gmail.com})} 
~and Dmitry V. Savostyanov%
\thanks{University of Southampton, Department of Chemistry, Highfield Campus, Southampton SO17 1BJ, United Kingdom ({\tt dmitry.savostyanov@gmail.com})}}
\title{Alternating minimal energy methods for linear systems in higher dimensions. Part II: Faster algorithm and application to nonsymmetric systems%
\thanks{Partially supported by
         RFBR grants  12-01-00546-a, 11-01-12137-ofi-m-2011, 11-01-00549-a, 12-01-33013, 12-01-31056,
         Russian Fed. Gov. contracts No. $\Pi 1112$, 14.740.11.0345, 16.740.12.0727
         and EPSRC grant EP/H003789/1  at the University of Southampton.
This work was initiated when D.S. was with the Institute of Numerical Mathematics RAS, Moscow.         
}}
\date{April 11, 2013}
\maketitle

\begin{abstract}
In this paper we accomplish the development of the fast rank--adaptive solver for tensor--structured symmetric positive definite linear systems in higher dimensions.
In~\cite{ds-amr1-2013} this problem is approached by alternating minimization of the energy function, which we combine with steps of the basis expansion in accordance with the steepest descent algorithm.
In this paper we combine the same steps in such a way that the resulted algorithm works with one or two neighboring cores at a time.
The recurrent interpretation of the algorithm allows to prove the global convergence and to estimate the convergence rate.
We also propose several strategies, both rigorous and heuristic, to compute new subspaces for the basis enrichment in a more efficient way.
We test the algorithm on a number of high--dimensional problems, including the non-symmetrical Fokker--Planck and chemical master equations, for which the efficiency of the method is not fully supported by the theory.
In all examples we observe a convincing fast convergence and high efficiency of the proposed method.

{\it Keywords:} high--dimensional problems, tensor train format, ALS, DMRG, steepest descent, convergence rate, superfast algorithms.
\end{abstract}


\section{Introduction}
In this paper we develop the results of~\cite{ds-amr1-2013}.
We consider tensor--structured linear systems, which arise naturally from high--dimensional problems, e.g. PDEs.
The number of unknowns grows exponentially w.r.t. the number of dimensions $d,$ which makes standard algorithms inefficient even for moderate $d.$
This problem is known as the \emph{curse of dimensionality}, and is attacked by different low--parametric approximations, e.g. \emph{sparse grids}~\cite{smolyak-1963,griebel-sparsegrids-2004} and \emph{tensor product methods}~\cite{kolda-review-2009,khor-survey-2011,hackbusch-2012}.
A particularly simple, elegant and efficient representation of high--dimensional data is a linear tensor network, also called the~\emph{matrix product states} (MPS) and~\emph{tensor train} (TT) format.

The MPS approach was originally proposed in the quantum physics community to represent the quantum states  of many--body systems~\cite{fannes-mps-1992,klumper-mps-1993}.
This representation was re-discovered as the TT format by Oseledets and Tyrtyshnikov~\cite{osel-tt-2011}, who were looking for a proper method to generalize a low--rank decomposition of matrices to high--dimensional arrays (tensors).
The MPS approach came with the \emph{alternating least squares} (ALS) and \emph{density matrix renormalization group} (DMRG)~\cite{white-dmrg-1993,ostlund-dmrg-1995} algorithms for the ground state problem.
The ALS considers the minimization of the Rayleigh quotient over the vectors with a fixed tensor structure, while DMRG does the same allowing the rank of the solution to change.
Experiments from quantum physics point out that the convergence of the DMRG is usually notably fast, while the one of the ALS can be rather poor.

The general numerical linear algebra context in which the TT format is introduced allows to think more widely about the power of tensor representations.
For instance, we can apply DMRG--like techniques to high--dimensional problems other than just the ground state problem, e.g. interpolation of high-dimensional data~\cite{ot-ttcross-2010,so-dmrgi-2011proc}, solution of linear systems~\cite{holtz-ALS-DMRG-2012,DoOs-dmrg-solve-2011}, fast linear algebra in tensor formats~\cite{Os-mvk2-2011}.
We can also consider better alternatives to the DMRG, which follow the same \emph{alternating linear scheme} (ALS) framework, but are numerically more efficient.
A tempting goal is to obtain an algorithm which has the DMRG-like convergence and the ALS-like numerical complexity.
In~\cite{ds-amr1-2013} we present such an algorithm for a solution of symmetric positive definite (SPD) linear systems in higher dimensions.

The central idea in~\cite{ds-amr1-2013} is to support the alternating steps, i.e. optimization in a fixed tensor manifold, by steps which \emph{expand} the basis in accordance with some classical iterative algorithms.
A \emph{steepest descent} (SD) algorithm is a natural choice for SPD problems.
The \emph{enrichment} step uses the essential information about the \emph{global} residual of the large high--dimensional system on the local optimization step, that helps to escape the spurious local minima introduced by the nonlinear tensor formulation and ensure the global convergence.
The convergence rate of the whole method can then be established adapting a classical theory.
In contrast, optimization in the fixed tensor manifolds can be analyzed via the Gauss--Seidel theory and only \emph{local} convergence estimates are available~\cite{ushmaev-tt-2013}, which hold only in a (very) small vicinity of the exact soution.

The global enrichment step used in algorithms  ``$t+\als(z)$'' and ``$\als(t+z)$'' in~\cite{ds-amr1-2013} modifies all components of the tensor train format simultaneously.  
There is nothing particularly wrong with this, but it is interesting to mix the same steps differently to obtain the algorithm which works with only one or two neighboring components at once, similarly to the DMRG technique.
In this paper we develop such a method, namely the \emph{alternating minimal energy} (AMEn) algorithm.
We prove the global convergence of AMEn and estimate the convergence rate w.r.t. the one of the steepest descent algorithm.
We also propose several methods to compute the required local component of the global residual, using either the SVD--based approximation, or incomplete Cholesky decomposition, or low--rank ALS approximation.

The rest of the paper is organized as follows.
In Section \ref{sec:def} we introduce necessary definitions and notations.
In Section \ref{sec:amen} we propose the AMEn algorithm, then we compare it with similar algorithms from~\cite{ds-amr1-2013} and prove the convergence theorem.
In Section \ref{sec:prac} we discuss efficient methods to compute the required component of the residual.
In Section~\ref{sec:num} we test the algorithm on a number of high--dimensional problems, including the non-symmetrical Fokker--Planck and chemical master equations, for which the efficiency of the method is not fully supported by the theory.
In all examples we observe a convincing fast convergence and high efficiency of the proposed method, as well as the advantages of the AMEn algorithm over the previously proposed ones.

\section{Notations and definitions} \label{sec:def}
This paper is based on the notations of~\cite{ds-amr1-2013}, which we recall briefly here.

We consider linear systems $Ax=y$ in $d$--dimensional space, i.e. assume that a vector $x$ has $d$ indices $i_1,\ldots,i_d,$ and $i_k=1,\ldots,n_k,$ $k=1,\ldots,d.$
Such arrays are referred to as $d$--tensors $\mathbf{x}=[\mathbf{x}(i_1,\ldots,i_d)].$
For the purposes of this paper it is convenient to consider a \emph{vectorization} of a tensor
\begin{equation}\nonumber
 x = \vec\mathbf{x}, \qquad x(\overline{i_1\ldots i_d})=\mathbf{x}(i_1,\ldots,x_d),
\end{equation}
where $\overline{i_1\ldots i_d}$ denotes a single index combined from $i_1,\ldots,i_d$%
\footnote{
The multi--index can be defined following the \emph{big--endian} convention
$\overline{i_1\ldots i_d}=i_d+(i_{d-1}-1)n_d +\ldots+(i_1-1)n_2\ldots n_d$
or \emph{little--endian} convention
$\overline{i_1\ldots i_d}=i_1+(i_2-1)n_1 +\ldots+(i_d-1)n_1\ldots n_{d-1}.$
The big--endian notation is similar to numbers written in the positional system, while the {little--endian} notation is used in numerals in the Arabic scripts and is consistent with the \textsc{Fortran} style of indexing.
The definition of the Kronecker (tensor) product $\otimes$ should be also consistent with the chosen endianness.
The orthodox definition in linear algebra assumes the big--endianness, while the development of the efficient program code usually makes us think in the little--endian way.
The rest of the paper can be read without a particular care of the endianness.
It is enough to remember that $z = x\otimes y$ means $z(\overline{ij})=x(i)y(j).$
}.
This \emph{index grouping} is widely used throughout the paper.
In the following we do not distinguish between $\mathbf{x}$~and~$x.$

The tensor train (TT) representation of $x$ is written as the following multilinear map%
\footnote{
Note that $\tau$ maps tensor train cores to a vectorized representation of a $d$--tensor, not to the $d$--tensor itself, cf. \emph{quantized} tensor train (QTT)~\cite{khor-qtt-2011}.
In this paper we do not distinguish between them and keep the notation simple.
},
\begin{equation}\label{eq:tt}
 \begin{split}
  x = \tau(\bar X) & = \tau(X^{(1)},\ldots,X^{(d)}), \\
  x(\overline{i_1\ldots i_d}) & = X^{(1)}_{\alpha_0,\alpha_1}(i_1) X^{(2)}_{\alpha_1,\alpha_2}(i_2) \ldots X^{(d-1)}_{\alpha_{d-2},\alpha_{d-1}}(i_{d-1}) X^{(d)}_{\alpha_{d-1},\alpha_d}(i_d),
 \end{split}
\end{equation}
where $i_k$ are referred to as \emph{mode} (physical) indices, $\alpha_k=1,\ldots,r_k$ are the \emph{rank} indices, $X^{(k)}$ are the tensor train \emph{cores} (TT--cores) and $\bar X = (X^{(1)}, \ldots, X^{(d)})$ denotes the tensor train.
We follow the Einstein summation convention, which assumes a summation over every pair of repeated indices.
All equations are supposed to hold for all possible values of free (unpared) indices.

The $\tau$ mapping is defined also for a subset of TT--cores (a \emph{subtrain}) and maps it to the \emph{interface} matrix of size $n_1\ldots n_k \times r_k,$ defined as follows,
\begin{equation}\label{eq:iface}
 \begin{split}
  X^{\leq k} & = \tau(X^{(1)},\ldots,X^{(k)}), \\
  X^{\leq k}(\overline{i_1 i_2 \ldots i_k}, \alpha_k) & = X^{(1)}_{\alpha_1}(i_1) X^{(2)}_{\alpha_1\alpha_2}(i_2) \ldots X^{(k)}_{\alpha_{k-1},\alpha_k}(i_k),
 \end{split}
\end{equation}
and similarly for symbols  $X^{<k}, X^{>k}$ and  $X^{\geq k}.$
For $x$ given by~\eqref{eq:tt} we have $X^{\{k\}}=X^{\leq k} X^{> k}.$

Note that the definition of $\tau$ allows us to write
$$
x = \tau(X^{\leq k},X^{>k}) = \tau(X^{< k},X^{(k)},X^{> k}) = \tau(X^{<k},X^{(k)},X^{(k+1)},X^{\geq k+2}),
$$
where two last mappings depict the decompositions used in ALS and DMRG algorithms proposed by S. White et al.~\cite{white-dmrg-1993,ostlund-dmrg-1995} for the ground state problem in Quantum Physics.
The original DMRG algorithm is formulated via the minimization of the Rayleigh quotient, where the heavily nonlinear high--dimensional optimization is reduced to the sequence of numerically tractable optimizations over the elements of each core.

Similarly, we consider the solution of a linear equation $Ax=y$ through the minimization of the \emph{energy} function
\begin{equation}\label{eq:energy}
 J(x) = \|x_\tru - x \|_A^2 =  (x,Ax) - 2\Re(x,y) + \const,
\end{equation}
where $x_\tru=A^{-1}y$ is the exact solution, and $\|u\|_A^2 = (u,u)_A=(u,Au)$ denotes the $A$--norm of a vector $u.$
Following the \emph{alternating linear scheme} (ALS), the high-dimensional minimization is reduced to the minimization w.r.t. all cores one-by-one.
Each \emph{local} minimization is equivalent to the solution of a linear system, which is tractable due to a moderate size.
The high-dimensional linear system can be split into a sequence of one-dimensional systems due to the \emph{linearity} of the tensor train format $\tau(X^{(1)},\ldots,X^{(d)})$ w.r.t. each TT--core $X^{(k)}.$
This linearity writes as the following matrix-by-vector product
\begin{equation}\label{eq:lin}
 \begin{split}
   x = \tau(\bar X)   & = \tau(X^{< k},X^{(k)},X^{> k}) = \P_{\neq k}(\bar X) x^{(k)}, \\
 \P_{\neq k}(\bar X)  & = X^{<k} \otimes I_{n_k} \otimes \left(X^{>k} \right)^\top,
 \end{split}
\end{equation}
where the vectorized TT--core $x^{(k)}$ is a reshape of the three-dimensional array into a vector
\begin{equation}\label{eq:vec}
 x^{(k)} = \vec X^{(k)}, \qquad x^{(k)}(\overline{\alpha_{k-1}i_k\alpha_k}) = X^{(k)}_{\alpha_{k-1},\alpha_k}(i_k).
\end{equation}
The elementwise definition of the \emph{frame matrix} $\X_{\neq k} = \P_{\neq k}(\bar X)$ is the following
\begin{equation}\label{eq:proj}
 \X_{\neq k}(\overline{i_1\ldots i_d}, \overline{\alpha_{k-1} j_k \alpha_k}) =  X^{(1)}_{\alpha_1}(i_1) \ldots X^{(k-1)}_{\alpha_{k-2},\alpha_{k-1}}(i_{k-1}) \delta(i_k,j_k)X^{(k+1)}_{\alpha_{k},\alpha_{k+1}}(i_{k+1}) \ldots X^{(d)}_{\alpha_{d-1}}(i_d),
\end{equation}
where $\delta(i,j)$ is the Kronecker symbol, i.e., $\delta(i,j)=1$ if $i=j$ and $\delta(i,j)=0$ elsewhere.
Similarly we define frame matrices $\P_{\leq k}(\bar X),$ $\P_{\geq k}(\bar X),$ $\P_k(\bar X).$
For example, $\P_{\leq k}(\bar X)$ writes
\begin{equation}\label{eq:pk}
 \begin{split}
  \X_{\leq k} = \P_{\leq k}(\bar X)  & = X^{\leq k} \otimes I_{n_{k+1} \ldots {n_d}}, \\
  \X_{\leq k}(\overline{i_1\ldots i_d}, \overline{\alpha_k j_{k+1}\ldots j_d }) & =  X^{(1)}_{\alpha_1}(i_1) \ldots X^{(k)}_{\alpha_{k-1},\alpha_k}(i_k) \delta(i_{k+1},j_{k+1}) \ldots \delta(i_d,j_d).
 \end{split}
\end{equation}

The $A$--orthogonal projector on the subspace $\Span U$ is defined as follows
\begin{equation}\label{eq:aort}
  R_U =  U (U^\trans A U)^{-1} U^\trans A.
\end{equation}
It is easy to check that $R_U^2 = R_U$ and $R_U U = U.$
Also, for any $v$ such that $(U,v)_A=0$ it holds $R_Uv=0,$ hence the name $A$--orthogonal.

\section{Alternating minimal energy methods} \label{sec:amen}
\subsection{AMEn and ALS}
One of the main results of the previous paper~\cite{ds-amr1-2013} is the algorithm $\als(t+z).$
Each iteration of this algorithm consists of one `global' basis enrichment step which changes all the cores, followed by $d$ update steps over all the cores subsequently.
Classical optimization algorithms for tensor networks, e.g. ALS and DMRG, follow the alternating linear framework, i.e. update one or two neighboring cores at a time.
We would like to keep the enrichment as well as other steps within the same idea, and
propose another version of the method as Alg.~\ref{alg:amen}, which we will refer to as the \emph{alternating minimal energy algorithm} (AMEn).
The difference between two algorithms is illustrated by a simple three--dimensional example in Fig.~\ref{fig:alsamen}.

To analyse the convergence of the $\als(t+z)$ algorithm, we can see it as a method which implements the (approximate) steepest descent step followed by a sequence of optimization steps for the energy function.
The approximate steepest descent step $x=t+\step\tilde z$ with $\tilde z\approx z=y-At$ and optimal $\step$ gives~\cite[Thm.~1]{ds-amr1-2013} the convergence rate $\omega_{\tilde z} = \omega_z + \O(\eps),$ where $\omega_z$ denotes the progress of the exact steepest descent step.
A fortiori, the \emph{global convergence} of the $\als(t+z)$ is proven with the convergence rate not slower than $\omega_{\tilde z}.$
The AMEn algorithm does not have a global enrichment step and the convergence can not be proven in one line.
However, the convergence analysis is possible if Alg.~\ref{alg:amen} is seen as a recurrent method.
Though the theoretical estimates do not provide a clear distinction which method is preferable, in numerical experiments in Sec.~\ref{sec:num} we will observe that the AMEn technique delivers more accurate solution, while the average convergence rate is almost the same as of the $\als(t+z)$ method.

\begin{figure}[h]
 \begin{center}
  \resizebox{.95\textwidth}{!}{  \begin{tikzpicture}[x=15mm,y=-12mm]
    \filldraw[very thin,draw=white!00,fill=green!02]  (0,-.5) circle[x radius=2, y radius=1];
    \filldraw[very thin,rounded corners=10,draw=white!00,fill=blue!02]   (-6,.3) rectangle (-3,6.3);
    \filldraw[very thin,rounded corners=10,draw=white!00,fill=blue!04]   (-3,.3) rectangle (-0,6.3);
    \filldraw[very thin,rounded corners=10,draw=white!00,fill=red!02]    (+3,.3) rectangle (+0,6.3);
    \filldraw[very thin,rounded corners=10,draw=white!00,fill=red!04]    (+6,.3) rectangle (+3,6.3);
    \node (init)  at (0,-1) {initial guess};
    \node (als)  at (-3.0,0.0)   {\textcolor{blue}{ALS(t+z)}};
    \node (amen) at (+3.0,0.0)   {\textcolor{red}{AMEn}};
    \node (alsup)   at (-4.2,6)  {\textcolor{blue}{update}};
    \node (alsrich)  at (-1.5,6) {\textcolor{blue}{expand}};
    \node (amenup)   at (1.5,6)  {\textcolor{red}{update}};
    \node (amenich)  at (4.5,6)  {\textcolor{red}{expand}};
    \node[rectangle] (0)     at (0,-0.3)
        {$ \left[\begin{array}{cc} t & \phantom{z}\end{array}\right]
          \left[\begin{array}{cc} t & \\ & \phantom{z}\end{array}\right]
          \left[\begin{array}{c} t  \\  \phantom{z}\end{array}\right]   $};
    \node[rectangle] (alsz)  at (-1.5,1)
        {$ \left[\begin{array}{cc} t & z \end{array}\right]
          \left[\begin{array}{cc} t & \\ & z\end{array}\right]
          \left[\begin{array}{c} t  \\ z\end{array}\right]   $};
    \node[rectangle] (als1)  at (-4.5,2)
        {$ \left[\begin{array}{cc} x & x\end{array}\right]
          \left[\begin{array}{cc} t & \\ & z\end{array}\right]
          \left[\begin{array}{c} t  \\  z\end{array}\right]   $};
    \node[rectangle] (als2)  at (-4.5,3.5)
        {$ \left[\begin{array}{cc} x & x \end{array}\right]
          \left[\begin{array}{cc} x & x  \\ x  & x \end{array}\right]
          \left[\begin{array}{c} t  \\  z\end{array}\right]   $};
    \node[rectangle] (als3)  at (-4.5,5)
        {$ \left[\begin{array}{cc} x & x\end{array}\right]
          \left[\begin{array}{cc} x & x \\ x & x\end{array}\right]
          \left[\begin{array}{c} x  \\  x \end{array}\right]   $};
    \node[rectangle] (amen1x) at (+1.5,1)
        {$ \left[\begin{array}{cc} u & \phantom{z}\end{array}\right]
          \left[\begin{array}{cc} t & \\ & \phantom{z}\end{array}\right]
          \left[\begin{array}{c} t  \\  \phantom{z}\end{array}\right]   $};
    \node[rectangle] (amen1z) at (+4.5,2)
        {$ \left[\begin{array}{cc} u & z\end{array}\right]
          \left[\begin{array}{cc} t & \\ & \phantom{z}\end{array}\right]
          \left[\begin{array}{c} t  \\  \phantom{z}\end{array}\right]   $};
    \node[rectangle] (amen2x) at (+1.5,3)
        {$ \left[\begin{array}{cc} u & z\end{array}\right]
          \left[\begin{array}{cc} u &  \\ u & \phantom{z}\end{array}\right]
          \left[\begin{array}{c} t  \\  \phantom{z}\end{array}\right]   $};
    \node[rectangle] (amen2z) at (+4.5,4)
        {$ \left[\begin{array}{cc} u & z\end{array}\right]
          \left[\begin{array}{cc} u & z \\ u & z\end{array}\right]
          \left[\begin{array}{c} t  \\  \phantom{z}\end{array}\right]   $};
    \node[rectangle] (amen3x) at (1.5,5)
        {$ \left[\begin{array}{cc} u & z\end{array}\right]
          \left[\begin{array}{cc} u & z \\ u & z\end{array}\right]
          \left[\begin{array}{c} u  \\ u \end{array}\right]   $};

   \draw[->,blue,thick] (0.west)+(left:0.1mm) to[out=left,in=up] (alsz.north);
   \draw[<-,blue,thick] (als1.north east)+(left:3ex) to[out=up,in=left] (alsz.west);
   \draw[->,blue,thick] (als1.south)+(down:0.1mm) to[out=down,in=up] (als2.north);
   \draw[->,blue,thick] (als2.south)+(down:0.1mm) to[out=down,in=up] (als3.north);

   \draw[->,red,thick] (0.east)+(right:0.1mm) to[out=right,in=up] (amen1x.north);
   \draw[<-,red,thick] (amen1z.north west)+(right:3ex) to[out=up,in=right] (amen1x.east);
   \draw[<-,red,thick] (amen2x.north east)+(left:3ex) to[out=up,in=left] (amen1z.west);
   \draw[<-,red,thick] (amen2z.north west)+(right:3ex) to[out=up,in=right] (amen2x.east);
   \draw[<-,red,thick] (amen3x.north east)+(left:3ex) to[out=up,in=left] (amen2z.west);
  \end{tikzpicture}
 }
 \end{center}
 \caption{Schematic representation of ALS(t+z) and AMEn algorithms} \label{fig:alsamen}
\end{figure}
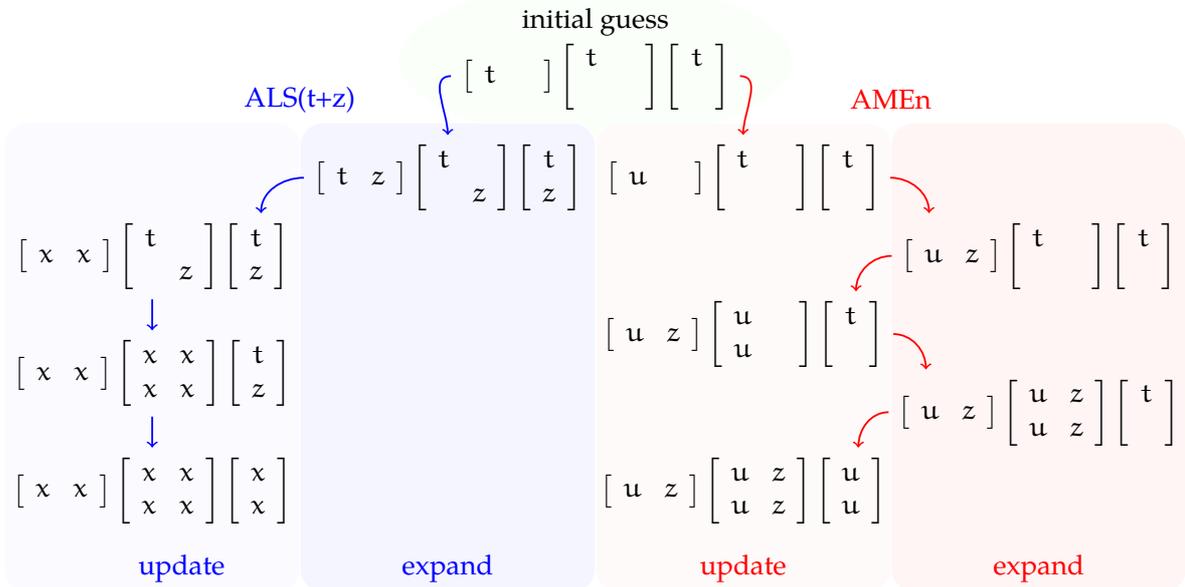

\subsection{AMEn in two dimensions}
\begin{figure}[t]
 \begin{center}
  \resizebox{.95\textwidth}{!}{  \begin{tikzpicture}[x=18mm,y=-8mm]

    \filldraw[thin,draw=white!00,fill=green!03]  (-1,-2) rectangle (1,3);
    \filldraw[thin,draw=white!00,fill=red!03]    ( 1,-2) rectangle (3,3);
    \filldraw[thin,draw=white!00,fill=red!05]    ( 3,-2) rectangle (5,3);
    \filldraw[thin,draw=white!00,fill=purple!01] ( 5,-2) rectangle (7,3);
    \filldraw[thin,draw=white!00,fill=red!07]    ( 7,-2) rectangle (9.5,3);
    \node (0t) at (0,-1.4) {initial guess};
    \node (1t) at (2,-1.4) {update core 1};
    \node (2t) at (4,-1.4) {expand basis};
    \node (3t) at (6,-1.4) {Galerkin correction};
    \node (4t) at (8.5,-1.4) {update core 2};
    \node[rectangle] (0) at (0,0)
        {$ \left[\begin{array}{cc} t & \phantom{z}\end{array}\right]
          \left[\begin{array}{c} t  \\  \phantom{z}\end{array}\right]   $};
    \node[rectangle] (1) at (2,0)
        {$ \left[\begin{array}{cc} u & \phantom{z}\end{array}\right]
          \left[\begin{array}{c} t  \\  \phantom{z}\end{array}\right]   $};
    \node[rectangle] (2) at (4,0)
        {$ \left[\begin{array}{cc} u & z\end{array}\right]
          \left[\begin{array}{c} t  \\  \phantom{z}\end{array}\right]   $};
    \node[rectangle] (3) at (6,1)
        {$ \left[\begin{array}{cc} u & z\end{array}\right]
          \left[\begin{array}{c} t  \\ v \end{array}\right]   $};
    \node[rectangle] (4) at (8.5,0)
        {$ \left[\begin{array}{cc} u & z\end{array}\right]
          \left[\begin{array}{c}  x \\ x \end{array}\right]   $};
    \node[rectangle] (5) at (7.8,2)
        {$ \left[\begin{array}{cc} u & z\end{array}\right]
          \left[\begin{array}{c}  \tilde x \\ \tilde x \end{array}\right]   $};

   \draw[->,red,thick] (0) to node[above,midway] {$\mu$} (1);
   \draw[->,red,thick] (1) to (2);
   \draw[->,red,thick,dashed] (2) to node[above,midway] {$\omega$} (3);
   \draw[->,red,thick,dashed] (3) to (4);
   \draw[->,red,thick] (2.east)  to[out=45,in=left] node[above,very near end] {exact} (4.west);
   \draw[->,red,thick] (2.south) to[out=down,in=left] node[below,near end] {approximate} (5.west);
  \end{tikzpicture}}
 \end{center}
 \caption{Schematic representation of the AMEn algorithm}
 \label{fig:amen}
\end{figure}
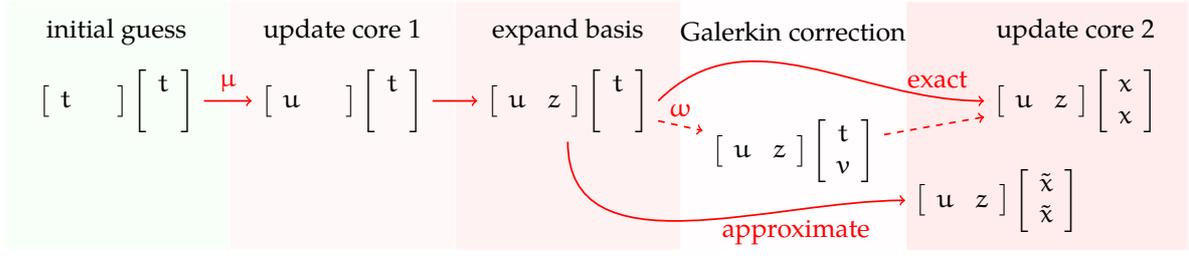

The idea of the convergence analysis is introduced by a two--dimensional example, see Fig.~\ref{fig:amen}.
In two dimensions Alg.~\ref{alg:amen} can be seen as a sequence of the following operations.
\begin{enumerate}
 \item Start from the initial guess
   \begin{equation}\nonumber
      t =
        \begin{bmatrix} T^{(1)} &  \phantom{Z^{(1)}} \end{bmatrix}
        \begin{bmatrix} T^{(2)} \\ \phantom{Z^{(2)}} \end{bmatrix}
   \end{equation}
 \item Update the first TT--core, minimizing the energy function over the entries of $T^{(1)}$
   \begin{equation}\nonumber
      U^{(1)} = \arg\min_{T^{(1)}} J\left(
        \begin{bmatrix} T^{(1)} &  \phantom{Z^{(1)}} \end{bmatrix}
        \begin{bmatrix} T^{(2)} \\ \phantom{Z^{(2)}} \end{bmatrix}
        \right), \qquad
      u =
        \begin{bmatrix} U^{(1)} &  \phantom{Z^{(1)}} \end{bmatrix}
        \begin{bmatrix} T^{(2)} \\ \phantom{Z^{(2)}} \end{bmatrix}
   \end{equation}
 \item Expand the basis in the first core using the first TT--core of the residual.
   \begin{equation}\nonumber
      y-Au = z \approx \tilde z=\tau(Z^{(1)},Z^{(2)}), \qquad
      u =
        \begin{bmatrix} U^{(1)} &  Z^{(1)} \end{bmatrix}
        \begin{bmatrix} T^{(2)} \\ \phantom{Z^{(2)}} \end{bmatrix}
   \end{equation}
 \item Perform the Galerkin correction step by minimizing the energy function over the bottom part of second TT--core.
   \begin{equation}\nonumber
      V^{(2)} = \arg\min_{V} J\left(
        \begin{bmatrix} U^{(1)} &  Z^{(1)} \end{bmatrix}
        \begin{bmatrix} T^{(2)} \\ V \end{bmatrix}
        \right), \qquad
      v =
        \begin{bmatrix} U^{(1)} &  Z^{(1)} \end{bmatrix}
        \begin{bmatrix} T^{(2)} \\ V^{(2)} \end{bmatrix}
   \end{equation}
 \item Minimize the energy function over all entries of the second TT--core
   \begin{equation}\nonumber
      X^{(2)} = \arg\min_{X} J\left(
        \begin{bmatrix} U^{(1)} &  Z^{(1)} \end{bmatrix}
        X
        \right), \qquad
      x =
        \begin{bmatrix} U^{(1)} &  Z^{(1)} \end{bmatrix}
        X^{(2)}
   \end{equation}
\end{enumerate}

The ALS update steps 2 and 5 reduce the energy function by a factor $\mu_1^2\leq1$ and $\mu_2^2\leq1,$ respectively, which can be rigorously estimated only locally, i.e. in a very small vicinity of a true solution.
The basis enrichment step 3 does not provide any progress, because it does not change the solution vector, but only its TT--representation.
The Galerkin correction step 4 does not technically present in the algorithm.
If we omit it, the update step 5 will deliver the same TT--core, optimizing the energy function over the positions occupied by both upper and bottom parts of the second core.
Without actually affecting the result of the computations, step 4 is essential to analyse the convergence of the whole method, since the progress of the Galerkin correction step can be estimated w.r.t. the one of the steepest descent.
This idea is formally expressed as the following theorem.

\begin{theorem}\label{thm:amen2}
 In the notations set above for the two-dimensional linear system $Ax=y$ one iteration of AMEn Alg.~\ref{alg:amen} provides the following progress
 \begin{equation}\nonumber
   \frac{J(x)}{J(t)}
       = \frac{\|x_\tru-x\|_A^2}{\|x_\tru-t\|_A^2}
       = \frac{\|x_\tru-u\|_A^2}{\|x_\tru-t\|_A^2}
       \:\frac{\|x_\tru-x\|_A^2}{\|x_\tru-u\|_A^2}
       = \mu^2 \omega_\X^2,
 \end{equation}
$$
\mu\leq1, \qquad \omega_\X \leq \omega_\Z \leq \omega_{\tilde z} = \omega_z + \O(\eps),
$$
 where $\X = \P_1(\bar X) = X^{(1)} \times I_{n_2},$ $\Z = \P_1(\bar Z) = Z^{(1)} \times I_{n_2},$ and $\omega_\S$ is defined by
 \begin{equation}\label{eq:omegas}
   \omega_\S^2 = 1 - \frac{(x_\tru-u,R_\S(x_\tru-u))_A}{(x_\tru-u,x_\tru-u)_A}.
 \end{equation}
\end{theorem}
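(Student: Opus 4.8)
The plan is to unfold the five-step description of one AMEn iteration and track the energy function across each step, then to compare the key step (the Galerkin correction) against an exact steepest-descent step for which the earlier theory of~\cite[Thm.~1]{ds-amr1-2013} supplies the bound $\omega_z+\O(\eps)$. First I would record that steps~1--2 produce $u$ from $t$ by an ALS minimization over the first core, so that $J(u)/J(t)=\mu^2$ for some $\mu\le1$ by definition of a minimizer over a larger-than-pointwise set; this is the factor that is only locally quantifiable and is simply carried along. Step~3 does not change the vector, only its representation: $u$ after enrichment equals $u$ before, so it contributes factor~$1$ to the energy ratio — but it crucially enlarges the column space of the left interface from $\Span U^{(1)}$ to $\Span[U^{(1)}\ Z^{(1)}]$, which is what makes the subsequent minimization able to move in the residual direction.

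The heart of the argument is steps~4 and~5. I would first observe that step~5, the full minimization of $J$ over the whole second core $X^{(2)}$ with frame matrix $\X_{\ne 2}=[U^{(1)}\ Z^{(1)}]\otimes I_{n_2}$, produces the $A$-orthogonal projection of $x_\tru$ onto $\Span\X_{\ne 2}$; hence $J(x)/J(u)$ equals exactly the Rayleigh-type quotient $\omega$ with $\S$ the relevant subspace, using the projector identity $R_U$ from~\eqref{eq:aort} and the fact that for a minimizer $x=R_{\X_{\ne 2}}x_\tru$ one has $\|x_\tru-x\|_A^2 = \|x_\tru-u\|_A^2 - (x_\tru-u, R_{\X_{\ne 2}}(x_\tru-u))_A$ (this is the standard energy-drop-equals-projected-component identity, valid because $u$ itself lies in $\Span\X_{\ne 2}$, so $R_{\X_{\ne 2}}u=u$). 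This gives $J(x)/J(u)=\omega_\X^2$ with $\omega_\X$ as in~\eqref{eq:omegas}, where I must check that the subspace realized by step~5 contains $\Span\X=\Span\bigl(X^{(1)}\times I_{n_2}\bigr)$ — indeed $\X$ picks out precisely the directions the minimization over the bottom core block can reach, so $\Span\X\subseteq\Span\X_{\ne2}$ in the appropriate sense and the larger projector can only decrease the residual further, giving $\omega_\X\le\omega_\Z$.

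Next I would insert step~4 as the intermediate comparison point. The Galerkin correction optimizes $J$ only over the \emph{bottom} block $V$ of the second core, i.e.\ over the one-dimensional (in the rank sense) subspace generated by $Z^{(1)}$; this is exactly an inexact steepest-descent update $u+\step\tilde z$ restricted to the TT manifold, with $\tilde z=\tau(Z^{(1)},Z^{(2)})\approx z=y-Au$. Here I invoke~\cite[Thm.~1]{ds-amr1-2013}: the optimal-step descent along $\tilde z$ achieves $\omega_{\tilde z}=\omega_z+\O(\eps)$, and since the Galerkin subspace $\Span\Z$ is contained in the full step-5 subspace $\Span\X_{\ne2}$, the monotonicity $\omega_\X\le\omega_\Z\le\omega_{\tilde z}$ follows from the elementary fact that enlarging the subspace in an $A$-orthogonal projection can only shrink the residual ($R_{U'}\succeq R_U$ in the $A$-inner product when $\Span U\subseteq\Span U'$, applied to the vector $x_\tru-u$). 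Multiplying the per-step ratios $\mu^2\cdot 1\cdot\omega_\X^2$ and telescoping $\|x_\tru-x\|_A^2/\|x_\tru-t\|_A^2$ as written in the statement completes the proof.

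The main obstacle I anticipate is the bookkeeping of \emph{which} subspace each minimization actually optimizes over, and verifying the nesting $\Span\Z\subseteq\Span\X_{\ne2}$ and $\Span\X\subseteq\Span\X_{\ne2}$ cleanly in the frame-matrix notation — in particular that step~5's minimizer is genuinely $R_{\X_{\ne2}}x_\tru$ and that $u$ lies in that range (so the energy-drop identity applies without a cross term). The reduction to~\cite[Thm.~1]{ds-amr1-2013} for the $\O(\eps)$ bound is then essentially a citation, provided the identification of step~4 with an inexact optimally-scaled steepest-descent step is made precise; the rest is the monotonicity of Rayleigh quotients under subspace inclusion, which is routine.
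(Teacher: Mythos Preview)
Your proposal follows the same route as the paper's proof: identify step~5 as the $A$-orthogonal projection $x=R_\X x_\tru$, use $u\in\Span\X$ to get $J(x)/J(u)=\omega_\X^2$, then derive the chain $\omega_\X\le\omega_\Z\le\omega_{\tilde z}=\omega_z+\O(\eps)$ from the subspace inclusions $\tilde z\in\Span\Z\subset\Span\X$ together with the citation to~\cite[Thm.~1]{ds-amr1-2013}, and carry $\mu^2\le1$ from step~2. That is exactly what the paper does.

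One point of confusion to clean up: you introduce a separate symbol $\X_{\neq2}=[U^{(1)}\ Z^{(1)}]\otimes I_{n_2}$ and then treat $\X$ and $\X_{\neq2}$ as potentially different objects, checking that ``$\Span\X\subseteq\Span\X_{\neq2}$''. In the paper's notation they are the \emph{same} matrix, since after enrichment $X^{(1)}=[U^{(1)}\ Z^{(1)}]$ and $\X=\P_1(\bar X)=X^{(1)}\otimes I_{n_2}$. Consequently your sentence ``$\X$ picks out precisely the directions the minimization over the bottom core block can reach'' is wrong --- that subspace is $\Span\Z=\Span(Z^{(1)}\otimes I_{n_2})$, the step-4 frame, not $\X$. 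Once you identify $\X_{\neq2}=\X$ and recognise that the relevant inclusion is $\Span\Z\subset\Span\X$ (because $Z^{(1)}$ is a sub-block of $X^{(1)}$), the second paragraph collapses to the paper's two-line argument and the bookkeeping worry you flag at the end disappears.
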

\begin{proof}
The minimization in step 5 is written as $x^{(2)}_\tru = \arg\min_{x^{(2)}} J(\X x^{(2)}),$ $x^{(2)} = \vec X^{(2)},$ cf.~\eqref{eq:lin},
and the gradient is zero when the Galerkin conditions $(\X^\trans A \X) x^{(2)}_\tru = \X^\trans y$ are met.
The solution after one AMEn iteration writes as follows
\begin{equation}\label{eq:Xx}
 x=\X x^{(2)}_\tru = \X (\X^\trans A \X)^{-1} \X^\trans A x_\tru = R_\X x_\tru,
\end{equation}
where $R_\X$ is the $A$--orthogonal projector on $\X,$ cf.~\eqref{eq:aort}.
Since $u \in \Span\X$ we have
\begin{equation}\label{eq:astep}
 x_\tru-x = \left( I - R_\X \right) (x_\tru-u), \qquad \frac{J(x)}{J(u)}=\frac{\|x_\tru-x\|_A^2}{\|x_\tru-u\|_A^2}= 1 - \frac{(x_\tru-u,R_\X(x_\tru-u))_A}{(x_\tru-u,x_\tru-u)_A}=\omega_\X^2.
\end{equation}
Similarly, the progress of the Galerkin correction step 4 is estimated as follows (see~\cite{ds-amr1-2013}),
\begin{equation}\label{eq:gstep}
 x_\tru-v = \left( I - R_\Z \right) (x_\tru-u), \qquad \frac{J(v)}{J(u)}=\frac{\|x_\tru-v\|_A^2}{\|x_\tru-u\|_A^2}= 1 - \frac{(x_\tru-u,R_\Z(x_\tru-u))_A}{(x_\tru-u,x_\tru-u)_A}=\omega_\Z^2.
\end{equation}
It is not easy to estimate $\omega_\X$ directly.
However, since $Z^{(1)}$ is a part of $X^{(1)},$ we have $\Span\Z\subset\Span\X$ and therefore $\omega_\X\leq\omega_\Z.$
Similarly, since $\tilde z \in\Span\Z$ it holds $\omega_\Z\leq\omega_{\tilde z},$ where $\omega_{\tilde z}$ denotes a progress of the perturbed steepest descent step.
The final estimate for $\omega_{\tilde z}$ is obtained in~\cite[Thm 1]{ds-amr1-2013} with a precise derivation of the asymptotic $\O(\eps)$ term.
To finish the proof we note that $J(u)/J(t) = \mu^2 \leq 1$ by construction of step 2.
\end{proof}
\begin{remark}\label{rem:less1}
 A convergence rate $\omega_z$ of the steepest descent algorithm is estimated using the Kantorovich inequality as follows
 \begin{equation}\nonumber
  \omega_z \leq \frac{\lmax(A)-\lmin(A)}{\lmax(A)+\lmin(A)} = \Omega(A) < 1,
 \end{equation}
 where $\lmin(A)$ and $\lmax(A)$ denote the smallest and largest eigenvalues of $A.$
 For any $\omega_z$ we can choose such a threshold level $\eps$ that $\omega_{\tilde z}<1,$ which guarantees the global convergence of the AMEn algorithm.
\end{remark}


\begin{algorithm}[t] 
 \caption{AMEn algorithm} \label{alg:amen}
 \begin{algorithmic}[1]
  \REQUIRE System $Ax=y$ in the TT--format, initial guess $t=\tau(\bar T)$
  \ENSURE Updated solution $x = \tau(\bar X)$
  \FOR[Cycle over TT--cores]{$k=1,\ldots,d$}
    \STATE Update $U^{(k)} = \arg\min_{S^{(k)}} J(\tau(X^{(1)},\ldots,X^{(k-1)},S^{(k)},T^{(k+1)},\ldots,T^{(d)}))$,
    \STATE For local problem~\eqref{eq:reduced} approximate $z_k=y_k-A_k u_k \approx \tilde z_k=\tau(Z_k^{(k)},\ldots,Z_k^{(d)})$
    \STATE Expand the basis $X^{(k)}=\begin{bmatrix}U^{(k)} & Z^{(k)}_k\end{bmatrix}, \quad S^{(k+1)} := \begin{bmatrix}T^{(k+1)} \\ 0\end{bmatrix}$
    \STATE Recover the TT--orthogonality of $\bar X_k = (X^{(1)},\ldots,X^{(k)},S^{(k+1)},T^{(k+2)},\ldots,T^{(d)})$.
  \ENDFOR
  \RETURN $x = \tau(X^{(1)},\ldots,X^{(d)})$
 \end{algorithmic}
\end{algorithm}

\subsection{AMEn in higher dimensions}
In higher dimensions, the AMEn Alg.~\ref{alg:amen} can be described using the same scheme.
We start from an initial guess $t=\tau(T^{(1)},T^{\geq2}).$
In step 2 we update the first core obtaining $u=\tau(U^{(1)},T^{\geq2}).$
In step 3 we approximate the residual $y-Au=z\approx\tilde z=\tau(Z^{(1)},Z^{\geq2})$ and expand the first core by $Z^{(1)}.$
These operations are numerically tractable, see Sec.~\ref{sec:prac} for details.
Minimization problem in step 5 leads to the following linear system
\begin{equation}\label{eq:loc1}
 (\X_1^\trans A \X_1) x^{\geq 2}  = \X_1^\trans y, \qquad \X_1=\P_1(\bar X)=X^{(1)}\otimes I_{n_2\ldots n_d}, \qquad x^{\geq 2} = \tau(X^{(2)},\ldots,X^{(d)}),
\end{equation}
which has $r_1n_2\ldots n_d$ unknowns and is still too large to be solved directly.
Note that $\X_1 = X^{(1)} \otimes I$ is a rank-1 multilevel matrix, hence $B=\X_1^\trans A \X_1$ has the same TT--ranks as $A$, but its TT--representation is shorter by one core.
Similarly, $\X_1^\trans y$ represents a smaller right-hand side in the TT format, and the solution $x^{\geq2}$ is sought in the TT format as well.
Therefore, the linear problem in $d$ dimensions is reduced to the one in $d-1$ dimensions, i.e. to the minimization over the remaining subtrain, and the same algorithm is applied recurrently.

The convergence rate of AMEn is defined by a recurrent application of the result of Thm.~\ref{thm:amen2}.
We need to consider a sequence of the reduced problems
\begin{equation}\label{eq:reduced}
 A_k x^{\geq k} = y_k, \qquad A_k = \X_{<k}^\trans A \X_{<k}, \quad y_k=\X_{<k}^\trans y, \qquad
 \X_{<k}=\P_{<k}(\bar X).
\end{equation}
The initial guess is $t^{\geq k}=\tau(T^{(k)},T^{\geq k+1}),$
the update step for the core $T^{(k)}$ gives $u_k=\tau(U^{(k)},T^{\geq k+1}),$
further steps of AMEn return the solution $x^{\geq k} = \tau(X^{(k)},\ldots,X^{(d)}),$
and the true solution is defined by $x^{\geq k}_\tru = A_k^{-1} y_k.$
Similarly to Thm.~\ref{thm:amen2} we have
\begin{equation}\label{eq:omegak}
  \frac{\|x^{\geq k}_\tru - u_k\|_{A_k}^2}{\|x^{\geq k}_\tru - t^{\geq k}\|_{A_k}^2}  = \mu_k^2 \leq 1, \qquad
  \frac{\|x^{\geq k}_\tru - x^{\geq k}\|_{A_k}^2}{\|x^{\geq k}_\tru - u_k\|_{A_k}^2}  = \omega_k^2 = 1 - \frac{(c_k,R^{\langle A_k \rangle}_\X c_k)_{A_k}}{\|c_k\|_{A_k}^2},
\end{equation}
where $c_k=x^{\geq k}_\tru - u_k,$
$\X = \P_1(X^{(k)},\ldots,X^{(d)}),$
and $R^{\langle A_k \rangle}_\X$ denotes the $A_k$--orthogonal projector to $\X.$
With these definitions in hand we write the following theorem.

\begin{theorem}\label{thm:amen}
The AMEn Alg.~\ref{alg:amen} converges globally with the following convergence rate
 \begin{equation}\label{eq:rate}
  \begin{split}
   \frac{\|x_\tru-x\|_A^2}{\|x_\tru-t\|_A^2} & =
                 \mu_1^2 \biggl(  \omega_1^2 + (1-\omega_1^2)
                 \mu_2^2 \Bigl(  \omega_2^2 + (1-\omega_2^2)
                 \mu_3^2 \bigl( \omega_3^2 + \ldots
               + \mu_{d-1}^2 \omega_{d-1}^2 \bigr) \ldots \Bigr)\biggr)
     \\ &    = \sum_{k=1}^{d-1} \omega_k^2 \prod_{j=1}^{k-1}(1-\omega_j^2) \prod_{j=1}^k \mu_j^2
     = \phi_d^2.
  \end{split}
 \end{equation}
\end{theorem}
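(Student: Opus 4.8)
The plan is to prove \eqref{eq:rate} by induction on the dimension $d$, turning the reduction around \eqref{eq:loc1}--\eqref{eq:reduced} into an \emph{exact} identity by means of an $A$--orthogonal Pythagoras splitting. I would first make the reduction precise: once step~2 has produced $U^{(1)}$ and step~4 has replaced the first core by the enriched $X^{(1)}=[\,U^{(1)}\ \ Z^{(1)}_1\,]$ (padding $T^{(2)}$ with a zero block), the remaining passes $k=2,\dots,d$ of Alg.~\ref{alg:amen} are, by direct inspection, exactly \emph{one} AMEn iteration for the reduced system $A_2 x^{\geq2}=y_2$ of \eqref{eq:reduced}, started from the zero--padded guess $t^{\geq2}=\tau([T^{(2)};0],T^{(3)},\dots,T^{(d)})$. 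Since $\X_1=X^{(1)}\otimes I$ is rank--one, $A_2=\X_1^\trans A\X_1$ is again SPD, with the same TT--ranks as $A$ but one core shorter, so the induction hypothesis applies to it; iterating gives the nested family $A_k x^{\geq k}=y_k$ with $A_{k+1}=(X^{(k)}\otimes I)^\trans A_k (X^{(k)}\otimes I)$ and the level--$(k{+}1)$ guess obtained from $u_k$ by zero--padding.

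The second step is the one--level recursion. Write $J_{A_k}(w)=\|x^{\geq k}_\tru-w\|_{A_k}^2$ and let $\X=\P_1(X^{(k)},\dots,X^{(d)})=X^{(k)}\otimes I$ as in \eqref{eq:omegak}. For any vector $(X^{(k)}\otimes I)w$ the $A_k$--orthogonal splitting with respect to the projector $R^{\langle A_k\rangle}_\X$ onto $\Span\X$ reads
\begin{equation}\nonumber
 \bigl\|x^{\geq k}_\tru-(X^{(k)}\otimes I)w\bigr\|_{A_k}^2 = E_k + \bigl\|x^{\geq k+1}_\tru-w\bigr\|_{A_{k+1}}^2,\qquad E_k:=\bigl\|(I-R^{\langle A_k\rangle}_\X)x^{\geq k}_\tru\bigr\|_{A_k}^2,
\end{equation}
with $E_k$ not depending on $w$; applying it with $(X^{(k)}\otimes I)w=u_k$ and with $(X^{(k)}\otimes I)w=x^{\geq k}$ gives $J_{A_k}(u_k)=E_k+J_{A_{k+1}}(t^{\geq k+1})$ and $J_{A_k}(x^{\geq k})=E_k+J_{A_{k+1}}(x^{\geq k+1})$. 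Since $u_k\in\Span\X$, the projector identity of \eqref{eq:astep} yields $E_k=\omega_k^2 J_{A_k}(u_k)$ with $\omega_k$ exactly the quantity in \eqref{eq:omegak}, hence $J_{A_{k+1}}(t^{\geq k+1})=(1-\omega_k^2)J_{A_k}(u_k)$; combining this with $J_{A_k}(u_k)=\mu_k^2 J_{A_k}(t^{\geq k})$ (also from \eqref{eq:omegak}) produces
\begin{equation}\nonumber
 \frac{J_{A_k}(x^{\geq k})}{J_{A_k}(t^{\geq k})} = \mu_k^2\Bigl(\omega_k^2+(1-\omega_k^2)\frac{J_{A_{k+1}}(x^{\geq k+1})}{J_{A_{k+1}}(t^{\geq k+1})}\Bigr).
\end{equation}
Taking Thm.~\ref{thm:amen2} as the base case at level $k=d-1$ (a two--dimensional reduced problem, ratio $\mu_{d-1}^2\omega_{d-1}^2$) and unrolling for $k=d-2,\dots,1$ reproduces the nested expression in \eqref{eq:rate}, and multiplying out the nested products turns it into the sum $\phi_d^2$.

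It remains to see $\phi_d^2<1$ uniformly. As in Thm.~\ref{thm:amen2}, at each level $\omega_k\le\omega_\Z\le\omega_{\tilde z_k}=\omega_{z_k}+\O(\eps)$ because $\Span\Z\subset\Span\X$ and $\tilde z_k\in\Span\Z$ with $\Z=\P_1(Z^{(k)}_k,\dots,Z^{(d)}_k)$; and, since step~5 restores TT--orthogonality, $\X_{<k}$ has orthonormal columns, so the spectrum of $A_k=\X_{<k}^\trans A\X_{<k}$ lies in $[\lmin(A),\lmax(A)]$ and $\omega_{z_k}\le\Omega(A)<1$ uniformly in $k$ and in the outer iteration. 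Fixing $\eps$ so that $\omega_{\tilde z_k}^2\le\bar\omega^2<1$ for all reduced problems (Remark~\ref{rem:less1}), a short induction on the nested convex combinations — using $\omega_k^2+(1-\omega_k^2)q=1-(1-\omega_k^2)(1-q)$ and $\mu_k^2\le1$ — gives $1-\phi_d^2\ge(1-\bar\omega^2)^{d-1}>0$, so AMEn is a contraction and converges globally and geometrically. The main obstacle is the bookkeeping of the reduction and, in particular, the exact energy splitting $J_{A_k}(u_k)=E_k+J_{A_{k+1}}(t^{\geq k+1})$ with $E_k=\omega_k^2 J_{A_k}(u_k)$: this is what makes the local losses at the successive cores assemble into the closed form \eqref{eq:rate} rather than into a mere inequality, and it also forces one to check that the $\mu_k,\omega_k$ emerging from the recursive calls coincide with those defined by \eqref{eq:omegak} for the operators $A_k$. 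The remaining ingredients — the $\omega_\X\le\omega_\Z\le\omega_{\tilde z}$ chain and the threshold $\eps$ guaranteeing strict contraction — are exactly as in the two--dimensional analysis and in Remark~\ref{rem:less1}.
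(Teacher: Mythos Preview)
Your proof is correct and follows essentially the same route as the paper: both argue by induction on $d$, splitting the $A$--norm error via the $A$--orthogonal projector $R_{\X_1}$ into the ``outer'' term $\mu_1^2\omega_1^2$ and the ``inner'' term $\mu_1^2(1-\omega_1^2)\phi_{d-1}^2$ coming from the reduced $(d{-}1)$--dimensional system, with the key identities $\X_1 t^{\geq2}=u$ and $\|\X_1 w\|_A^2=\|w\|_{A_2}^2$ doing the bookkeeping. Your presentation differs only cosmetically---you phrase the recursion at a generic level $k$ and unroll, rather than at the top level with an induction hypothesis---and you add the explicit quantitative bound $1-\phi_d^2\ge(1-\bar\omega^2)^{d-1}$, which the paper states only qualitatively in the Remark following the theorem.
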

\begin{proof}
In two dimensions, the theorem reduces to Thm.~\ref{thm:amen2}, which proves the base of recursion.
We suppose that~\eqref{eq:rate} holds in $d-1$ dimension and prove it recurrently for $d$ dimensions.

If $t=\tau(T^{(1)},T^{\geq2})$ is the initial guess, $u=\tau(U^{(1)},T^{\geq2})$ appears after the update of the first core, and $x=\tau(X^{(1)},X^{\geq2})$ is the result returned by one iteration of AMEn, the error $x_\tru-x$ is written as follows
\begin{equation}\nonumber
   x_\tru - x  = x_\tru - \tau(X^{(1)}, X^{\geq2}_\tru) + \tau(X^{(1)}, X^{\geq2}_\tru) - \tau(X^{(1)}, X^{\geq2})
               = \left( x_\tru - \X_1 x^{\geq2}_\tru \right) + \X_1 \left( x^{\geq2}_\tru - x^{\geq2} \right),
\end{equation}
where the first term is the error of the first (`outer') AMEn step provided the solution $x^{\geq2}_\tru$ of the reduced problem~\eqref{eq:loc1} is computed exactly,
and the second term is the error of other (`inner') AMEn steps, which we find using the assumption of the recurrence.
Using~\eqref{eq:astep} we show that these terms are $A$--orthogonal,
\begin{equation}\label{eq:errdec}
 \begin{split}
   x_\tru - x          & = \left( I - R_{\X_1} \right) x_\tru + \X_1 \left( x^{\geq2}_\tru - x^{\geq2} \right), \\
   \|x_\tru - x\|_A^2  & = \left\| (I - R_{\X_1}) x_\tru\right\|_A^2 + \left\|\X_1 (x^{\geq2}_\tru - x^{\geq2}) \right\|_A^2.
 \end{split}
\end{equation}

The first term writes by Thm.~\ref{thm:amen2} as follows,
\begin{equation}\nonumber
 \frac{\| (I-R_{\X_1}) x_\tru \|_A^2}{\|x_\tru - t\|_A^2} =
 \frac{\| (I-R_{\X_1}) (x_\tru-u) \|_A^2}{\|x_\tru - t\|_A^2} =
 \frac{\| x_\tru - u \|_A^2}{\|x_\tru - t\|_A^2} \: \frac{\| (I-R_{\X_1}) (x_\tru-u) \|_A^2}{\|x_\tru - u\|_A^2}
 = \mu_1^2 \omega_1^2.
\end{equation}

For the second term we need the following norm equivalence,
$$
 \left\|\X_1(x^{\geq2}_\tru-x^{\geq2})\right\|_A^2 =
 \left(x^{\geq2}_\tru-x^{\geq2}, \X_1^\trans A \X_1 (x^{\geq2}_\tru-x^{\geq2})  \right) =
 \left\|x^{\geq2}_\tru-x^{\geq2}\right\|_B^2,
$$
where $B=\X_1^\trans A \X_1.$
In the right--hand side we see the error norm of the AMEn algorithm applied to the linear problem~\eqref{eq:loc1} with $d-1$ cores.
According to our assumption, it writes by~\eqref{eq:rate} as follows,
\begin{equation}\label{eq:rate1}
   \frac{\|x^{\geq2}_\tru-x^{\geq2}\|_B^2}{\|x^{\geq2}_\tru-t^{\geq2}\|_B^2}
       = \sum_{k=2}^{d-1} \omega_k^2 \prod_{j=1}^{k-1}(1-\omega_j^2) \prod_{j=1}^k \mu_j^2
       =\phi_{d-1}^2,
\end{equation}
and for the second term we obtain
\begin{equation}\nonumber
  \frac{\|\X_1(x^{\geq2}_\tru-x^{\geq2})\|_A^2}{\|x_\tru-t\|_A^2} =
   \phi_{d-1}^2 \frac{\|x^{\geq2}_\tru-t^{\geq2}\|_B^2}{\|x_\tru-t\|_A^2}
   = \phi_{d-1}^2 \mu_1^2
   \frac{\|\X_1(x^{\geq2}_\tru - t^{\geq2})\|_A^2}{\|x_\tru-u\|_A^2}.
\end{equation}
In the numerator we simplify $\X_1 x^{\geq2}_\tru = R_{\X_1} x_\tru,$ cf.~\eqref{eq:Xx}, and
\begin{equation}\nonumber
  \X_1 t^{\geq2}  = \tau(X^{(1)},t^{\geq2})
                        = \tau\left(
                        \begin{bmatrix} U^{(1)} & Z^{(1)} \end{bmatrix}
                        \begin{bmatrix} t^{\geq2} \\ \phantom{z^{\geq2}} \end{bmatrix}
                        \right)
                        = \tau(U^{(1)},t^{\geq2})
                        = u = R_{\X_1}u.
\end{equation}
Finally, we write last part of the seconf term as follows, cf.~\eqref{eq:astep},
\begin{equation}\nonumber
   \frac{\|\X_1(x^{\geq2}_\tru-t^{\geq2})\|_A^2}{\|x_\tru-u\|_A^2} =
   \frac{\|R_{\X_1}(x_\tru-u)\|_A^2}{\|x_\tru-u\|_A^2} = 1-\omega_1^2.
\end{equation}

Substituting both terms into~\eqref{eq:errdec}, we complete the proof by
\begin{equation}\label{eq:rec}
   \frac{\|x_\tru - x\|_A^2}{\|x_\tru-t\|_A^2} =
   \frac{\| (I - R_{\X_1}) x_\tru\|_A^2}{\|x_\tru-t\|_A^2} +
   \frac{\|\X_1 (x^{\geq2}_\tru - x^{\geq2})\|_A^2}{\|x_\tru-t\|_A^2} =
   \mu_1^2\omega_1^2 + \mu_1^2 (1-\omega_1^2) \phi_{d-1}^2 = \phi_d^2,
\end{equation}
where $\phi_{d-1}$ evaluates the convergence rate of the AMEn in $d-1$ dimensions by~\eqref{eq:rate1}.
\end{proof}
\begin{remark}
 From the recurrence relation~\eqref{eq:rec} it is clear that if the convergence of the reduced problem $\phi_{d-1}<1,$ then the convergence rate of the full problem
 $$
 \phi_d^2=\mu_1^2\omega_1^2 + \mu_1^2 (1-\omega_1^2) \phi_{d-1}^2<1.
 $$
 By Remark~\ref{rem:less1}, we can always choose the approximation threshold $\eps$ to ensure $\phi_2<1$ for the AMEn algorithm in two dimensions.
 Therefore, we can always choose $\eps$ to provide $\phi_d<1,$ which guarantees the global convergence of Alg.~\ref{alg:amen}.
\end{remark}

 Similarly to Thm.~\ref{thm:amen2} we can estimate $\omega_k$ in~\eqref{eq:omegak} as follows,
\begin{equation}\nonumber
  \omega_k^2
       = 1 - \frac{(c_k,R^{\langle A_k \rangle}_\X c_k)_{A_k}}{\|c_k\|_{A_k}^2}
    \leq 1 - \frac{(c_k,R^{\langle A_k \rangle}_\Z c_k)_{A_k}}{\|c_k\|_{A_k}^2}
    \leq 1 - \frac{(c_k,R^{\langle A_k \rangle}_{\tilde z_k} c_k)_{A_k}}{\|c_k\|_{A_k}^2} = \omega_{\tilde z_k}^2,
\end{equation}
where $\tilde z_k\approx z_k=A_kc_k=y_k-A_ku_k,$  $\tilde z_k=\tau(Z^{(k)},Z^{\geq k+1})$ and $\Z=\P_1(Z^{(k)},Z^{\geq k+1}).$
In the right--hand side we see the convergence rate of the perturbed steepest descent method applied to the reduced problem with the matrix $A_k.$
It is estimated~\cite[Thm 1]{ds-amr1-2013} as follows
\begin{equation}\nonumber
 \omega_{\tilde z_k} = \omega_{\tilde z_k} + \O(\eps), \qquad
 \omega_{z_k} \leq \frac{\lmax(A_k)-\lmin(A_k)}{\lmax(A_k)+\lmin(A_k)} = \Omega(A_k) < 1,
\end{equation}
where $\eps$ denotes the relative accuracy of $\tilde z_k.$
It can be shown that if all $\X_{<k}$ are orthogonal we have
$$
\lmin(A_k) \geq \lmin(A_{k-1}), \qquad \lmax(A_k) \leq \lmax(A_{k-1}),
$$
\begin{equation}\label{eq:Omegak}
\Omega(A_{d-1}) \leq \Omega(A_{d-2}) \leq \ldots \leq \Omega(A_1) = \Omega(A) = \frac{\lmax(A)-\lmin(A)}{\lmax(A)+\lmin(A)},
\end{equation}
where the last term estimates the convergence rate of SD algorithm applied to $Ax=y.$

\begin{remark}
The requirement for $\X_{<k}$ to be orthogonal for $k=1,\ldots,d$ is equivalent to the \emph{TT--orthogonality} of the tensor train $\bar X,$ see~\cite{osel-tt-2011} for details.
As a counterpart of \eqref{eq:Omegak}, we may say that this requirement prevents the condition numbers of reduced matrices in \eqref{eq:reduced} from increasing, which is essential for numerical stability.
By construction, $X^{(k)} = \begin{bmatrix} U^{(k)} & Z^{(k)}\end{bmatrix}$ does not provide the TT--orthogonality of $\bar X.$
An additional step is required to \emph{recover} the orthogonality, i.e. make the TT--core $X^{(k)}$ column--orthogonal.
It is done via a QR decomposition
\begin{equation}\nonumber
   \begin{bmatrix}U^{(k)} & Z_k^{(k)}\end{bmatrix} \begin{bmatrix}T^{(k+1)} \\ 0\end{bmatrix}
 = \begin{bmatrix}\hat U^{(k)} & \hat Z^{(k)}\end{bmatrix}
   \begin{bmatrix}R_{uu} & R_{uz} \\ & R_{zz}\end{bmatrix}
   \begin{bmatrix}T^{(k+1)} \\ 0\end{bmatrix}
 = \begin{bmatrix}\hat U^{(k)} & \hat Z^{(k)}\end{bmatrix}
   \begin{bmatrix}R_{uu} T^{(k+1)} \\ 0\end{bmatrix}.
\end{equation}
We denote the result after orthogonalization by the same symbol
 $X^{(k)}=\hat X^{(k)}=\begin{bmatrix}\hat U^{(k)} & \hat Z^{(k)}\end{bmatrix}.$
All considerations in Theorem \ref{thm:amen} remain valid, since all estimates are based on the subspaces, which are unaffected by the QR decomposition.
Therefore, we imply the TT--orthogonality silently to simplify the discussion, and the actual operation is fast and does not influence the analysis.
\end{remark}

In \cite[Thm. 3]{ds-amr1-2013}, the convergence rate  of the \emph{greedy descent} algorithm $t+\als(z)$ is given by exactly the same formula as~\eqref{eq:rate}, but the values $\omega_k$ are defined differently.
In the AMEn method, $\omega_k$ is given by~\eqref{eq:omegak} and relates to the convergence of the reduced problem~\eqref{eq:reduced}.
For the algorithm $t+\als(z)$ it is defined as $\omega_k=\omega_{\Z_{\leq k}}$ in terms of~\eqref{eq:omegas}.
Since $\Span\Z_{\leq k+1}\in\Span\Z_{\leq k}$, for the greedy algorithm it holds $\omega_{k+1}\leq\omega_k.$
For the AMEn algorithm, we can prove this only for upper bounds $\Omega(A_k)$ as shown in~\eqref{eq:Omegak}.

Considering the `width' of $\Z_{\leq k}$ and $\X_{\leq k}$, we can expect that
$$
\omega_k(\mathrm{AMEn}) \ll \omega_k(t+\als(z)), \qquad
\mu_k(\mathrm{AMEn}) \leq \mu_k(t+\als(z)),
$$
which is observed in numerical experiments.
It is not clear however whether this heuristic statement holds in general.

\section{Fast approximation of the residual} \label{sec:prac}
In this section we discuss how to compute the approximation on step 3 in AMEn Alg.~\ref{alg:amen} efficiently.

\subsection{SVD--based approximation}
In steepest descent schemes proposed in \cite{ds-amr1-2013}, the low-rank approximation of the residual is computed once per iteration, and a standard SVD-based TT-rounding procedure from~\cite{osel-tt-2011} can be used.
In AMEn Alg.~\ref{alg:amen} we can not approximate each $z_k$ individually by the TT-SVD, since it makes the total complexity quadratic in the dimension $d$.
To keep the complexity linear in $d,$ we have to investigate the tensor structure of $z_k.$

Looking at the TT representation of the reduced system \eqref{eq:reduced} and recalling that $\X_{<k} = X^{<k} \otimes I_{n_k \cdots n_d}$ has a rank-one structure, we conclude that $y_k = \X_{<k}^\trans y$ inherits the blocks $k+1,\ldots,d$ from $y$ as follows,
$$
y_k(\overline{\alpha_{k-1}i_k,\ldots,i_d}) = Y^{(k)}_k (\overline{\alpha_{k-1}i_k}) Y^{(k+1)}(i_{k+1}) \ldots Y^{(d)}(i_d),
$$
where $Y^{(k)}_k = \P^\trans_{<k}(X^{(1)},\ldots,X^{(k)}) Y^{\leq k}$.
The similar representation holds for the local matrix $A_k = \tau(A^{(k)}_k,A^{(k+1)}, \ldots, A^{(d)})$.
Therefore the local residual $z_k=y_k-A_ku_k$ writes
\begin{equation}
z_k(\overline{\alpha_{k-1}\,i_k\ldots i_d}) = \hat Z_k^{(k)} (\overline{\alpha_{k-1}i_k}) \cdot \hat Z^{(k+1)}(i_{k+1}) \cdots \hat Z^{(d)}(i_d),
\label{eq:res_red}
\end{equation}
$$
\hat Z_k^{(k)} (\overline{\alpha_{k-1} i_k})  = \begin{bmatrix}Y^{(k)}_k(\overline{\alpha_{k-1}i_k}) & -A^{(k)}_k(\overline{\alpha_{k-1}i_k}, \overline{\beta_{k-1}j_k}) \otimes U^{(k)}(\overline{\beta_{k-1}j_k})\end{bmatrix},
$$
\begin{equation}\nonumber 
\hat Z^{(p)}(i_p)  =  \begin{bmatrix}Y^{(p)}(i_p) \\ & A^{(p)}(i_p,j_p)\otimes T^{(p)}(j_p)\end{bmatrix},
 \qquad p=2,\ldots,d-1,
\end{equation}
\begin{equation}\nonumber 
\hat Z^{(d)}(i_d)  =  \begin{bmatrix}Y^{(d)}(i_d) \\ A^{(d)}(i_d,j_d)\otimes T^{(d)}(j_d)\end{bmatrix}.
\end{equation}
The TT decomposition of the exact residual $z_k=\tau(\hat Z_k^{(k)},\hat Z^{(k+1)},\ldots,\hat Z^{(d)})$ has only one block which actually depends on the information gained in the step $k,$ the others can be precomputed before the iteration.
In the approximate residual $\tilde z_k=\tau(Z_k^{(k)},\ldots,Z_k^{(d)})$ all blocks depend on the recently computed $U^{(k)},$ but only one block $Z_k^{(k)}$ is actually required.
This means that if we keep all TT--cores $Z^{(p)}$ right--orthogonal, we can compute $Z^{(k)}_k$ by the SVD compression of $\hat Z^{(k)}_k$ only.
Therefore, the SVD--based approximation of the residual involves the information from only one core, and the complexity of each enrichment step does not grow with $d.$
The overall complexity is therefore linear in $d,$ as required.



\subsection{Cholesky--based  approximation}
The singular value decomposition provides the optimal approximation accuracy for a prescribed rank, but is numerically expensive.
Each TT--core $\hat Z^{(k)}$ has the sizes $R_{k-1} \times n_k \times R_k$, where $R_k = r_k r_k(A) + r_k(y),$ and the QR and SVD operations have the complexity $\O(R^3) = \O(r^6),$  which may be inefficient.
Since a very precise approximation of the residual is not always required, we may avoid expensive QR and SVD steps by considering the Unfinished Cholesky algorithm (see, e.g.~\cite{sav-rr-2009})  applied to the Gram matrix of the first unfolding of $z_k$.
A careful implementation allows to reduce the complexity to $\O(r^5)$.

Given \eqref{eq:res_red}, its first unfolding reads
$$
Z^{\{k\}}_k(\overline{\alpha_{k-1}i_k}, \overline{i_{k+1},\ldots,i_d}) = \hat Z_k^{(k)} (\overline{\alpha_{k-1}i_k}) \cdot \hat Z^{(k+1)}(i_{k+1}) \cdots \hat Z^{(d)}(i_d),
$$
and the Gram matrix $G_k = Z^{\{k\}}_k (Z^{\{k\}}_k)^\trans$ computes as follows,
$$
G_k(\overline{\alpha_{k-1}i_k}, \overline{\beta_{k-1}j_k}) = \Gamma_k^{(k)}(\overline{\alpha_{k-1}i_k}, \overline{\beta_{k-1}j_k})\cdot \Gamma^{(k+1)} \cdots \Gamma^{(d)},
$$
where
$
\Gamma_k^{(k)}(\overline{\alpha_{k-1}i_k}, \overline{\beta_{k-1}j_k}) = \hat Z_k^{(k)}(\overline{\alpha_{k-1}i_k}) \otimes \hat Z_k^{(k)}(\overline{\beta_{k-1}j_k}),
$
and
$
\Gamma^{(p)} = \hat Z^{(p)}(i_{p}) \otimes \hat Z^{(p)}(i_{p})
$
for $p=k+1,\ldots,d$.

Similarly to the SVD--based method, we can precompute $\Gamma^{(p)}$ before the iteration.
The enrichment vectors are then calculated as the factors $Z_k^{(k)}$  in the Unfinished Cholesky decomposition
$$
G_k(\overline{\alpha_{k-1}i_k}, \overline{\beta_{k-1}j_k}) \approx Z_k^{(k)}(\overline{\alpha_{k-1}i_k}) D \left(Z_k^{(k)}(\overline{\beta_{k-1}j_k})\right)^\trans.
$$

\subsection{ALS--based approximation}
To reduce the complexity even further, we can approximate $\tilde z \approx z=y-Au$ using the auxiliary ALS iteration.
We start from some low-rank initial guess $\tilde z = \tau(\bar Z)$ and minimize $\|\tilde z - z\|$ under the constraint $\tilde z = \Z_{\neq k} z^{(k)},$ where $z^{(k)} = \vec Z^{(k)}.$
For a unitary $\Z_{\neq k}$ this leads to the extremal condition $z^{(k)} = \Z_{\neq k}^\trans z.$

Until the convergence of the fixed-rank ALS is not proved, this approach is heuristic.
However, as was observed in numerical experiments, it provides the enrichment basis almost of the same quality as the SVD-based method, although  much faster.
It is enough to conduct two alternating methods simultaneously step by step, which means that only one ALS update is performed for $\tilde z$ between the subsequent AMEn iterations.

The algorithm is organized as follows.
Given some low-rank approximation $\tilde z$, we assume that  $\mathrm{span}(Z^{>k})$ is a good approximation basis for $z_k$ as well (which appears to hold in practice).
That is, the enrichment is computed as a projection
\begin{equation} \label{eq:rich_als}
\vec Z_k^{(k)} = \P_{\neq 1}^\trans(Z^{(k)},\ldots,Z^{(d)})z_k.
\end{equation}
Now, we need to update $\tilde z$ for the forthcoming iterations.
The current solution approximant is $u = \tau(X^{<k},U^{(k)},T^{>k})$, so the TT--core $Z^{(k)}$ writes
\begin{equation} \label{eq:res_als}
 \begin{split}
 z^{(k)} & = \Z_{\neq k}^\trans (y-Au) = \Z_{\neq k}^\trans y - \Z_{\neq k}^\trans A \P_{\neq k}(\bar U) u^{(k)},
 \\
 \bar U & =(X^{(1)},\ldots,X^{(k-1)},U^{(k)},T^{(k+1)},\ldots,T^{(d)}).
 \end{split}
\end{equation}
Note that $Z^{(k)}$ serves only as an update of the global residual approximation $\tilde z$ and cannot be used as an enrichment directly.

Similarly to the previous sections, one may avoid $\O(d^2)$ cost of \eqref{eq:res_als},\eqref{eq:rich_als} by performing all calculations involving the same TT blocks (e.g. $\hat Z^{(p)}$) only once during the AMEn iteration.
The resulting complexity is therefore that of the fixed-rank ALS, $\mathcal{O}(\rho n^2 r^3)$, where $\rho = r(\tilde z)$ is the TT-rank of $\tilde z$.
In practice, it is usually enough to take $\rho \le 5$.

Finally, let us note that the minimization $\|\tilde z - z\|$ is equivalent to the maximization of $(\tilde z, z)$. In other words, the AMEn method solves approximately the following \emph{minimax} problem,
$$
\max\limits_{\tilde z} \min\limits_{x} (\tilde z, y-Ax) = \max\limits_{\tilde z} \min\limits_{x} (\tilde z, x_\tru-x)_A,
$$
by performing the subsequent ALS updates for $\tilde z$ in the rank-$\rho$ TT format, and $x$ in the TT format with a \emph{varying} rank $r$.
This allows us to establish a connection between the AMEn method and the greedy approximations, in particular, the Minimax Proper Generalized Decomposition \cite{nouy-modred-2010}.
However, the greedy techniques usually perform the optimization over rank-1 separable tensors.
As a some improvement one may mention the \emph{orthogonal} greedy method, which orthogonalizes the residual to the basis of $R$ current canonical factors of the solution (i.e. selects $R$ scalars).
The AMEn approach may be considered as a next milestone in the family of adaptive tensor-structured linear solvers.
By updating a larger portion of solution data at a time, it appears to be more robust and accurate, as was demonstrated in~\cite{ds-amr1-2013}.


\section{Numerical experiments} \label{sec:num} 

In these experiments, we compare the MATLAB versions of AMEn algorithms proposed above (SVD, Chol, ALS) with the $\als(t+z)$ method from the previous work \cite{ds-amr1-2013}, as well as the DMRG method from \cite{DoOs-dmrg-solve-2011}.
The AMEn and DMRG methods were implemented within the framework of the TT-Toolbox\footnote{\url{http://github.com/oseledets/TT-Toolbox}}~2.2 (routines \texttt{amen\_solve2} and \texttt{dmrg\_solve3}, respectively),
and the computations were done at the Linux machine with 2.6 GHz AMD Opteron CPU, and MATLAB R2012a.

\subsection{SPD example: Poisson equation}
First, we consider the same symmetric positive definite example as in \cite{ds-amr1-2013}.
This is the high--dimensional Poisson equation,
\begin{equation}\nonumber
-\Delta x = e, \qquad x \in \Omega = [0, 1]^d, \qquad \left. x\right|_{\partial\Omega}=0,
\end{equation}
where $\Delta$ is the finite difference Laplacian discretization on a uniform grid with $64$ points in each direction, i.e., the total size of the system is $64^d$, and $e$ is the vector of all ones.
Different greedy-type methods were compared in \cite{ds-amr1-2013}, as well as the $\als(t+z)$ method. Now we focus on non-greedy techniques, including AMEn+SVD and AMEn+ALS, see Fig. \ref{fig:lap_amen}.
The TT--rank of the enrichment $\tilde z$ is $\rho=4$, Frobenius-norm threshold for the solution $\mathtt{tol} = 10^{-5}$, and the problem dimension $d=16$.

We see that all methods except the DMRG demonstrate comparable performances.
Even though in the beginning of the iterations the $\als(t+z)$ method seems to be the fastest,
it approaches the same CPU times as the AMEn methods when the rank increases.
In this example, the desired accuracy level is reached by all algorithms.
However, it might be not the case, as we will see in the following.

\begin{figure}[t]
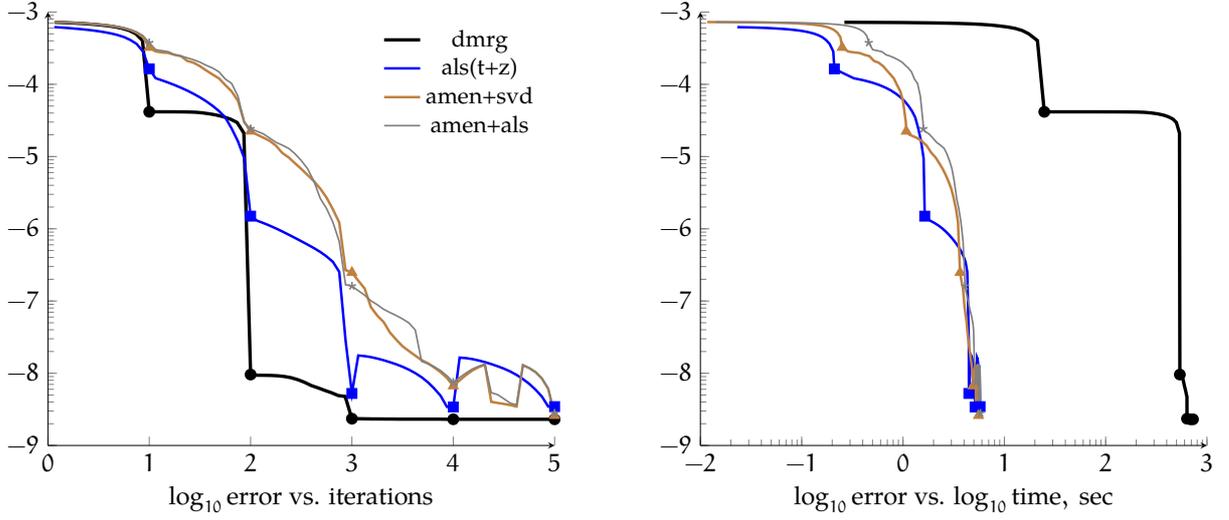

 \begin{center}  \def\nnn{16}
  \resizebox{.48\textwidth}{!}{\input{./Pic/pgfart.sty} \begin{tikzpicture}

\begin{axis}[%
xmode=normal,ymode=log,
cycle list name=amen,
xlabel={$\log_{10}\mathrm{error}$ vs. iterations},
xmin=0, xmax=5,
ymin=1e-09, ymax=1e-3,
yminorticks=true,
legend style={}]

\foreach \m in {dmrg,alsx,amrsvd,amrals}
  {
  \addplot+[no marks]   table[header=false, x index = 0, y index = 2]{./box/dat/conv_lp\nnn_\m.dat};
  }

\pgfplotsset{cycle list shift=-4}

\foreach \m in {dmrg,alsx,amrsvd,amrals}
  {
  \addplot+[only marks] table[header=false, x index = 0, y index = 2]{./box/dat/conv_lp\nnn_\m.int};
  }

\legend{dmrg,als(t+z),amen+svd,amen+als}

\end{axis}
\end{tikzpicture}
  \resizebox{.48\textwidth}{!}{\input{./Pic/pgfart.sty} \begin{tikzpicture}

\begin{axis}[%
xmode=log,ymode=log,
cycle list name=amen,
xlabel={$\log_{10}\mathrm{error}$ vs. $\log_{10}\mathrm{time,~sec}$ },
xmin=1e-2, xmax=1e3,
xminorticks=true,
ymin=1e-09, ymax=1e-3,
yminorticks=true,
legend style={at={(.99,.99)},anchor=north east}
]

\foreach \m in {dmrg,alsx,amrsvd,amrals}
  {
  \addplot+[no marks]   table[header=false, x index = 1, y index = 2]{./box/dat/conv_lp\nnn_\m.dat};
  }

\pgfplotsset{cycle list shift=-4}

\foreach \m in {dmrg,alsx,amrsvd,amrals}
  {
  \addplot+[only marks] table[header=false, x index = 1, y index = 2]{./box/dat/conv_lp\nnn_\m.int};
  }


\end{axis}
\end{tikzpicture}
 \end{center}
\caption{$A$--norm error in different methods versus iterations (left), and CPU time (right). Poisson equation, $d=16$}
\label{fig:lap_amen}
\end{figure}

\subsection{Nonsymmetric example: Chemical Master Equation}
The second example is the
Chemical Master Equation \cite{vankampen-stochastic-1981}, applied to the $d$-dimensional cascade gene regulatory model \cite{hegland-cme-2007,Ammar-cme-2011}.
This is the huge-sized ODE
$$
\frac{d\psi(t)}{d t} = A \psi(t) \in \R^{n^d},
$$
where $\psi(t)=\{\psi(\i,t)\}$, $\i = (i_1,\ldots,i_d) \in [0,\ldots,63]^{\otimes d}$, so that $n=64$,
and the operator is formulated as follows,
$$
\begin{array}{rcl}
 A & = & A_1 + \cdots + A_{d}, \\
 A_1 \psi(\i,t) & = & \alpha_0 \cdot (\psi(\i-\e_1,t)-\psi(\i,t)) + \delta \cdot ((i_1+1) \psi(\i+\e_1,t) - i_1 \psi(\i,t)), \\
 A_k \psi(\i,t) & = & \dfrac{\beta i_{k-1}}{\beta i_{k-1} + \gamma} (\psi(\i-\e_k,t)-\psi(\i,t)) + \delta \cdot ((i_k+1) \psi(\i+\e_k,t) - i_k \psi(\i,t))
\end{array}
$$
for $k=2,\ldots,d,$ where $\e_k \in \R^d$ is the $k$-th identity vector.
The particular model parameters were fixed to the values
$$
\alpha_0=0.7, \quad \delta=0.07, \quad \beta = 1, \quad \gamma=5.
$$

\begin{figure}[t]
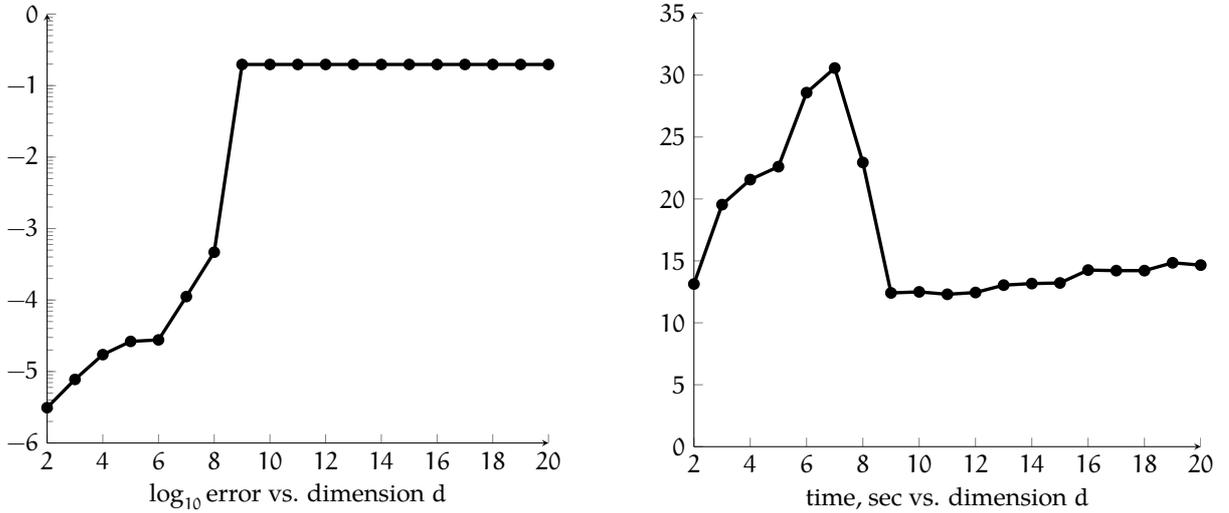

 \begin{center}
  \resizebox{.48\textwidth}{!}{\input{./Pic/pgfart.sty} \begin{tikzpicture}

\begin{semilogyaxis}[%
cycle list name=amen,
xmin=2, xmax=20,
ymin=1e-06, ymax=1,
xlabel={$\log_{10}\mathrm{error}$ vs. dimension $d$},
yminorticks=true]

\addplot+[] table[header=true, x=dim, y=resid]{./box/dat/cme20_dmrg_d.dat};

\end{semilogyaxis}
\end{tikzpicture}
  \resizebox{.48\textwidth}{!}{\input{./Pic/pgfart.sty} \begin{tikzpicture}

\begin{axis}[%
cycle list name=amen,
xmin=2, xmax=20,
ymin=0, ymax=35,
xlabel={time,~sec vs. dimension $d$},
yminorticks=true]

\addplot+[]
table[header=true, x=dim, y=time]{./box/dat/cme20_dmrg_d.dat};

\end{axis}
\end{tikzpicture}
 \end{center}
\caption{Error (left) and CPU time (right) of the DMRG method vs. the dimension}
\label{fig:casc_dmrg}
\end{figure}

\begin{figure}[t]
 \begin{center}
  \resizebox{.48\textwidth}{!}{\input{./Pic/pgfart.sty} \begin{tikzpicture}

\begin{semilogyaxis}[%
cycle list name=amensymm,
xmin=0, xmax=20,
ymin=1e-06, ymax=1,
xlabel={$\log_{10}\mathrm{residual}$ vs. iterations},
yminorticks=true]

\addplot+[] table[header=true, x=iter, y=r_dmrg  ]{./box/dat/cme20_conv_symm_rho4.dat};
\addplot+[] table[header=true, x=iter, y=r_dmrg_s]{./box/dat/cme20_conv_symm_rho4.dat};
\addplot+[] table[header=true, x=iter, y=r_alsx  ]{./box/dat/cme20_conv_symm_rho4.dat};
\addplot+[] table[header=true, x=iter, y=r_alsx_s]{./box/dat/cme20_conv_symm_rho4.dat};
\addplot+[] table[header=true, x=iter, y=r_amen  ]{./box/dat/cme20_conv_symm_rho4.dat};
\addplot+[] table[header=true, x=iter, y=r_amen_s]{./box/dat/cme20_conv_symm_rho4.dat};

\addplot+[] table[header=true,x=iter, y=resid]{./box/dat/cme20_stop_symm_rho4.dat};

\end{semilogyaxis}
\end{tikzpicture}
  \resizebox{.48\textwidth}{!}{\input{./Pic/pgfart.sty} \begin{tikzpicture}

\begin{loglogaxis}[%
cycle list name=amensymm,
xmin=0.1, xmax=10000,
xminorticks=true,
ymin=1e-06, ymax=1,
yminorticks=true,
xlabel={$\log_{10}\mathrm{residual}$ vs. $\log_{10}\mathrm{time,~sec}$},
legend style={at={(0.03,0.03)},anchor=south west}]

\addplot+[] table[header=true, x=t_dmrg,   y=r_dmrg   ]{./box/dat/cme20_conv_symm_rho4.dat};
\addplot+[] table[header=true, x=t_dmrg_s, y=r_dmrg_s ]{./box/dat/cme20_conv_symm_rho4.dat};
\addplot+[] table[header=true, x=t_alsx,   y=r_alsx   ]{./box/dat/cme20_conv_symm_rho4.dat};
\addplot+[] table[header=true, x=t_alsx_s, y=r_alsx_s ]{./box/dat/cme20_conv_symm_rho4.dat};
\addplot+[] table[header=true, x=t_amen,   y=r_amen   ]{./box/dat/cme20_conv_symm_rho4.dat};
\addplot+[] table[header=true, x=t_amen_s, y=r_amen_s ]{./box/dat/cme20_conv_symm_rho4.dat};

\addplot+[] table[header=true, x=time, y=resid]{./box/dat/cme20_stop_symm_rho4.dat};

\end{loglogaxis}
\end{tikzpicture}
 \end{center}
\begin{center} \hfil
\resizebox{.48\textwidth}{!}{\input{./Pic/pgfart.sty} \begin{tikzpicture}

\begin{semilogyaxis}[%
cycle list name=amensymm,
xmin=0, xmax=20,
ymin=1e-06, ymax=1,
xlabel={$\log_{10}\mathrm{error}$ vs. iterations},
yminorticks=true]

\addplot+[] table[header=true, x=iter, y=e_dmrg  ]{./box/dat/cme20_conv_symm_rho4.dat};
\addplot+[] table[header=true, x=iter, y=e_dmrg_s]{./box/dat/cme20_conv_symm_rho4.dat};
\addplot+[] table[header=true, x=iter, y=e_alsx  ]{./box/dat/cme20_conv_symm_rho4.dat};
\addplot+[] table[header=true, x=iter, y=e_alsx_s]{./box/dat/cme20_conv_symm_rho4.dat};
\addplot+[] table[header=true, x=iter, y=e_amen  ]{./box/dat/cme20_conv_symm_rho4.dat};
\addplot+[] table[header=true, x=iter, y=e_amen_s]{./box/dat/cme20_conv_symm_rho4.dat};

\addplot+[] table[header=true, x=iter, y=error]{./box/dat/cme20_stop_symm_rho4.dat};

\end{semilogyaxis}
\end{tikzpicture}
\resizebox{.48\textwidth}{!}{\input{./Pic/pgfart.sty} \begin{tikzpicture}

\begin{loglogaxis}[%
cycle list name=amensymm,
xmin=0.1, xmax=10000,
xminorticks=true,
ymin=1e-06, ymax=1,
yminorticks=true,
xlabel={$\log_{10}\mathrm{error}$ vs. $\log_{10}\mathrm{time,~sec}$},
legend style={at={(0.03,0.03)},anchor=south west}]

\addplot+[] table[header=true, x=t_dmrg,   y=e_dmrg  ]{./box/dat/cme20_conv_symm_rho4.dat};
\addplot+[] table[header=true, x=t_dmrg_s, y=e_dmrg_s]{./box/dat/cme20_conv_symm_rho4.dat};
\addplot+[] table[header=true, x=t_alsx,   y=e_alsx  ]{./box/dat/cme20_conv_symm_rho4.dat};
\addplot+[] table[header=true, x=t_alsx_s, y=e_alsx_s]{./box/dat/cme20_conv_symm_rho4.dat};
\addplot+[] table[header=true, x=t_amen,   y=e_amen  ]{./box/dat/cme20_conv_symm_rho4.dat};
\addplot+[] table[header=true, x=t_amen_s, y=e_amen_s]{./box/dat/cme20_conv_symm_rho4.dat};

\addplot+[] table[header=true, x=time, y=error]{./box/dat/cme20_stop_symm_rho4.dat};

\legend{dmrg,dmrg-s, als(t+z),als(t+z)-s, amen,amen-s, stop}
\end{loglogaxis}
\end{tikzpicture}
\end{center}
\caption{Residual (top) and error (bottom) in different methods vs. iterations (left), and CPU time (right), 20-dimensional problem}
\label{fig:casc_amen_symm}
\end{figure}

\begin{figure}[t]
 \begin{center}
  \resizebox{.48\textwidth}{!}{\input{./Pic/pgfart.sty} \begin{tikzpicture}

\begin{semilogyaxis}[%
cycle list name=varamen,
xmin=0, xmax=20,
ymin=1e-06, ymax=1,
xlabel={$\log_{10}\mathrm{error}$ vs. iterations},
yminorticks=true]

\addplot+[] table[header=true, x=iter, y= e_alsx]{./box/dat/cme20_conv_symm_rho4.dat};
\addplot+[] table[header=true, x=iter, y= e_svd ]{./box/dat/cme20_conv_amen_rho4.dat};
\addplot+[] table[header=true, x=iter, y= e_chol]{./box/dat/cme20_conv_amen_rho4.dat};
\addplot+[] table[header=true, x=iter, y= e_als ]{./box/dat/cme20_conv_amen_rho4.dat};

\addplot+[] table[header=true, x=iter, y=error]{./box/dat/cme20_stop_amen_rho4.dat};

\end{semilogyaxis}
\end{tikzpicture}
  \resizebox{.48\textwidth}{!}{\input{./Pic/pgfart.sty} \begin{tikzpicture}

\begin{loglogaxis}[%
cycle list name=varamen,
xmin=0.1, xmax=100,
xminorticks=true,
ymin=1e-06, ymax=1,
yminorticks=true,
xlabel={$\log_{10}\mathrm{error}$ vs. $\log_{10}\mathrm{time,~sec}$},
legend style={at={(0.03,0.03)},anchor=south west}]

\addplot+[] table[header=true, x=t_alsx,y=e_alsx]{./box/dat/cme20_conv_symm_rho4.dat};
\addplot+[] table[header=true, x=t_svd, y=e_svd ]{./box/dat/cme20_conv_amen_rho4.dat};
\addplot+[] table[header=true, x=t_chol,y=e_chol]{./box/dat/cme20_conv_amen_rho4.dat};
\addplot+[] table[header=true, x=t_als, y=e_als ]{./box/dat/cme20_conv_amen_rho4.dat};

\addplot+[] table[header=true, x=time, y=error]{./box/dat/cme20_stop_amen_rho4.dat};

\legend{als(t+z),amen+svd,amen+chol,amen+als,stop}

\end{loglogaxis}
\end{tikzpicture}
 \end{center}
\caption{Error in different methods vs. iterations (left), and CPU time (right), 20-dimensional problem}
\label{fig:casc_var_amen}
\end{figure}
\begin{figure}[t]
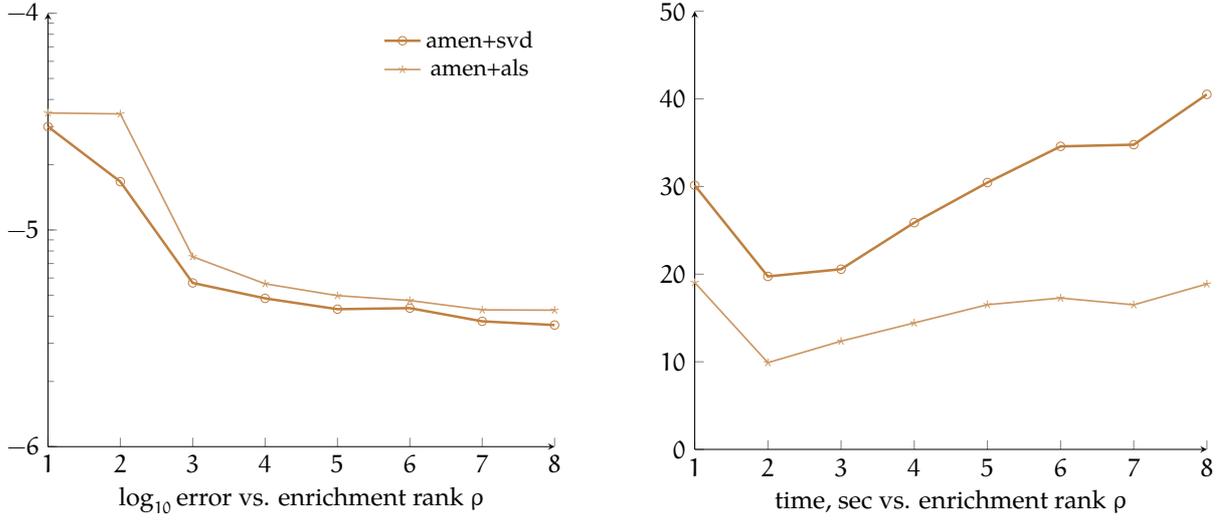

 \begin{center}
  \resizebox{.48\textwidth}{!}{\input{./Pic/pgfart.sty} \begin{tikzpicture}

\begin{semilogyaxis}[%
cycle list name=varamen,
xmin=1, xmax=8,
ymin=1e-06, ymax=1e-04,
xlabel={$\log_{10}\mathrm{error}$ vs. enrichment rank $\rho$},
yminorticks=true]

\pgfplotsset{cycle list shift=1}
\addplot+[] table[header=true, x=rho, y= e_svd]{./box/dat/cme20_rho.dat};
\addplot+[] table[header=true, x=rho, y= e_als]{./box/dat/cme20_rho.dat};

\legend{amen+svd,amen+als};

\end{semilogyaxis}
\end{tikzpicture}
  \resizebox{.48\textwidth}{!}{\input{./Pic/pgfart.sty} \begin{tikzpicture}

\begin{axis}[%
cycle list name=varamen,
xmin=1, xmax=8,
ymin=0, ymax=50,
xlabel={time,~sec vs. enrichment rank $\rho$},
yminorticks=true]

\pgfplotsset{cycle list shift=1}
\addplot+[] table[header=true, x=rho, y =t_svd]{./box/dat/cme20_rho.dat};
\addplot+[] table[header=true, x=rho, y =t_als]{./box/dat/cme20_rho.dat};

\end{axis}
\end{tikzpicture}
 \end{center}
\caption{Error (left) and CPU time (right) in AMEn methods vs. enrichment rank, 20-dimensional problem}
\label{fig:casc_amr_rho}
\end{figure}

The Chemical Master Equation serves as an accurate model for gene transcription, protein production and other biological processes.
However, its straightforward solution becomes impossible rapidly with increasing number of species $d$.
Existing techniques include the Monte-Carlo-type methods (so-called SSA \cite{gillespie-ssa-1976} and its descendants), as well as more tensor-related ones: Sparse Grids \cite{hegland-cme-2007}, greedy approximations in the canonical tensor format \cite{Ammar-cme-2011} and tensor manifold dynamics \cite{jahnke-cme-2008}.
The first two approaches only relax the curse of dimensionality to some extent; typical examples involve up to 10 dimensions and may take from 15 minutes to many hours on high-performance machines.
Tensor-product low-rank approaches seem to be more promising.
Unfortunately, we cannot estimate a possible potential of greedy or manifold dynamics methods, whereas up to now our alternating linear solution technique appears to be more efficient.
For more intensive study of the CME applications of the AMEn and DMRG methods see \cite{dkh-cme-2012} and \cite{kkns-cme-2013}, respectively.
Note that for systems with moderate dimensions and smaller time steps, the DMRG method can be of a good use for such problems, as was demonstrated in \cite{kkns-cme-2013}.
However, as we will see, the AMEn algorithm appears to perform better than DMRG for more complicated problems.

Two specific tricks allow to take more benefits from the tensor structuring.
First, we employ the Crank-Nicolson discretization in time, but instead of the step-by-step propagation, consider the time as a $(d+1)$-th variable and formulate one global system encapsulating all time steps \cite{DKhOs-parabolic1-2012},
$$
\left(I - \frac{\tau}{2} A\right) \psi(t_{m+1}) = \left(I + \frac{\tau}{2} A\right) \psi(t_{m}), \quad m=0,\ldots,N_t-1,
$$
where $t_m = \tau m$, $\tau$ is the time step size, and the initial state is $\psi(0) = \bigotimes_{k=1}^d e_1$, $e_1 \in \R^{64}$ is the first identity vector.
In particularly, we choose $N_t=2^{12}$, $T=10$, and $\tau=T/N_t$.
Such a time interval is not enough to reach the stationary solution, but the transient process is also of interest.
As a result, we end up with a $(d+1)$-dimensional system of size $n^d \cdot N_t$.

Second, we prepare all the initial data and seek the solution not in the $(d+1)$-dimensional TT-format directly, but in the so-called Quantized TT format \cite{khor-qtt-2011}:
we reshape additionally all tensors to the sizes $2 \times 2 \times \cdots \times 2$, and apply the $(d \log_2(n) + \log_2(N_t))$-dimensional TT decomposition, but with each mode size reduced to $2$.

However, the matrix is strongly nonsymmetric, which makes difficulties for the DMRG approach.
We fix the truncation tolerance for the solution to $\mathtt{tol}=10^{-5}$,
and track the Frobenius-norm error of the DMRG solution w.r.t. the reference one, obtained by the AMEn+SVD method with tolerance $10^{-8}$, versus the dimension $d$, see Fig. \ref{fig:casc_dmrg}.
Since the DMRG technique takes into account only local information on the system, its accuracy deteriorates rapidly with the increasing dimension.
A stagnation in a local minimum is also reflected by a sharp drop of the CPU time, since the method skips the ``converged'' TT blocks.
This makes the DMRG unreliable for high--dimensional problems, even if the QTT format allows to get rid of large mode sizes.

Now, we fix the dimension $d=20$, and compare both the error and residual accuracies of all methods, as well as the computational times.
In all cases, the Frobenius-norm tolerance was set to $\mathtt{tol}=10^{-5}$, and the enrichment rank to $\rho=4$.

First of all, since our methods are proven to converge in the SPD case, we shall examine both the initial and symmetrized systems (Fig. \ref{fig:casc_amen_symm}).
A well-known way to treat a general problem via a symmetric method is the normal, or symmetrized formulation, $A^\trans A x = A^\trans y$.
However, both the condition number and the TT ranks of $A^\trans A$ are the squared ones of $A$, and this approach should be avoided when possible.

Three particular techniques are considered: the DMRG method, the AMEn+SVD (marked as ``amen'' in Fig. \ref{fig:casc_amen_symm}) and the $\als(t+z)$ one.
The symmetrized versions are denoted by the ``-s'' tag.

In addition, note that the convergence of the methods may be checked locally due to the zero total correction after the enrichment in Alg. \ref{alg:amen}: before recomputing the $(k+1)$-th block, calculate the local residual provided by the previous solution $T^{(k+1)}$.
If it is below the threshold $\mathtt{tol}$ for all $k$, the method may be considered as converged, and stopped.
Occurrences of this fact are marked by red rectangles (``stop'').

We observe that the symmetrization allows the DMRG method to converge at least to the accuracy $10^{-3}$,
but increases the CPU time by a factor greater than 100 due to the squaring of the TT ranks and condition number of the matrix.
Contrarily, for the AMEn and $\als(t+z)$ methods the symmetrization is completely inefficient and redundant: despite pessimistic theoretical estimates, the nonsymmetric algorithms converge rapidly to an accurate solution approximation.

Though the non-symmetrized methods may admit oscillations in the residual,
the Frobenius-norm error threshold is almost satisfied in both AMEn and $\als(t+z)$ methods.
Nevertheless, the AMEn algorithm appears to be more accurate thanks to the enrichment update in each step.
Also, its local stopping criterion is trustful: it fires just after the real error becomes smaller than the tolerance, which is not the case for other methods.

Since both AMEn-type methods in this test exploit the SVD-based residual approximation,
they demonstrate almost the same CPU times.
However, using the additional techniques from Section \ref{sec:prac} we can reduce the complexity while maintaining almost the same accuracy, see Fig. \ref{fig:casc_var_amen}.
While the AMEn+Chol method still operates with the exact residual, the AMEn+ALS only needs to compute scalar products of the true residual and its low-rank approximation, which makes it more efficient than the AMEn+SVD method, as well as the $\als(t+z)$ one.

Finally, we test the performance of the two AMEn realizations with respect to the enrichment rank $\rho$ (TT-rank of $\tilde z$), see Fig. \ref{fig:casc_amr_rho}.
As expected, the higher $\rho$ is, the more accurate solution can be computed.
On the other hand, it is not necessary to pick very large ranks, since the corresponding accuracy improvement does not overcome the significant increase in CPU time.

\newpage
\subsection{Fokker-Planck equation for complex fluid dynamics}
\begin{figure}[h]
 \begin{center}
  \begin{tikzpicture}[scale=0.3]
\draw (0,0) circle (1);
\node at (3.5,-2.5) {$\mathbf{q}_1$};

\draw[semithick,-] (1,0) to [out=-45,in=135] (1.5,-0.5);
\draw[semithick,-] (1.5,-0.5) to [out=45,in=-135] (2.5,0.5);
\draw[semithick,-] (2.5,0.5) to [out=-45,in=135] (3.5,-0.5);
\draw[semithick,-] (3.5,-0.5) to [out=45,in=-135] (4.5,0.5);
\draw[semithick,-] (4.5,0.5) to [out=-45,in=135] (5.5,-0.5);
\draw[semithick,-] (5.5,-0.5) to [out=45,in=-135] (6,0);

\draw[semithick] (7,0) circle (1);
\node at (8.5,-2.5) {$\mathbf{q}_2$};

\draw[semithick,-] (8,0) to [out=45,in=-135] (8.5,0.5);
\draw[semithick,-] (8.5,0.5) to [out=-45,in=135] (9.5,-0.5);

\node at (12,0) {$\cdots$};

\draw[semithick,-] (13.5,-0.5) to [out=45,in=-135] (14.5,0.5);
\draw[semithick,-] (14.5,0.5) to [out=-45,in=135] (15,0);

\draw[semithick] (16,0) circle (1);
\node at (14.5,-2.5) {$\mathbf{q}_{d-1}$};

\draw[semithick,-] (17,0) to [out=-45,in=135] (17.5,-0.5);
\draw[semithick,-] (17.5,-0.5) to [out=45,in=-135] (18.5,0.5);
\draw[semithick,-] (18.5,0.5) to [out=-45,in=135] (19.5,-0.5);
\draw[semithick,-] (19.5,-0.5) to [out=45,in=-135] (20.5,0.5);
\draw[semithick,-] (20.5,0.5) to [out=-45,in=135] (21.5,-0.5);
\draw[semithick,-] (21.5,-0.5) to [out=45,in=-135] (22,0);

\draw[semithick] (23,0) circle (1);
\node at (20,-2.5) {$\mathbf{q}_d$};
\end{tikzpicture}
 \end{center}
\caption{Bead-spring model of a polymer in a fluid}
\label{fig:beadspring}
\end{figure}
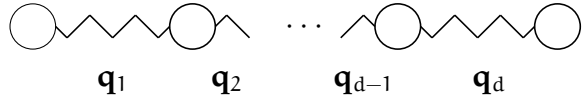
Another example of high-dimensional problems arising in the context of probability distribution modeling, is the Fokker-Planck equation (see e.g. \cite{risken-fpe-1989}).
As a particular application, consider the 8-dimensional Fokker-Planck equation of the polymer micro-model arising in the non-Newtonian fluid dynamics \cite{Lozinski-dilute-2004, Lozinski-2D_FENE-2003,Ammar-greedy_fluid_model-2006}.
The polymer molecules in a solution are subject to the Brownian motion, and are often modeled as bead-spring chains (see Fig. \ref{fig:beadspring}).
The spring extensions, being the degrees of freedom of the dynamical system, become the coordinates in the Fokker--Planck equation.

\begin{figure}[t]
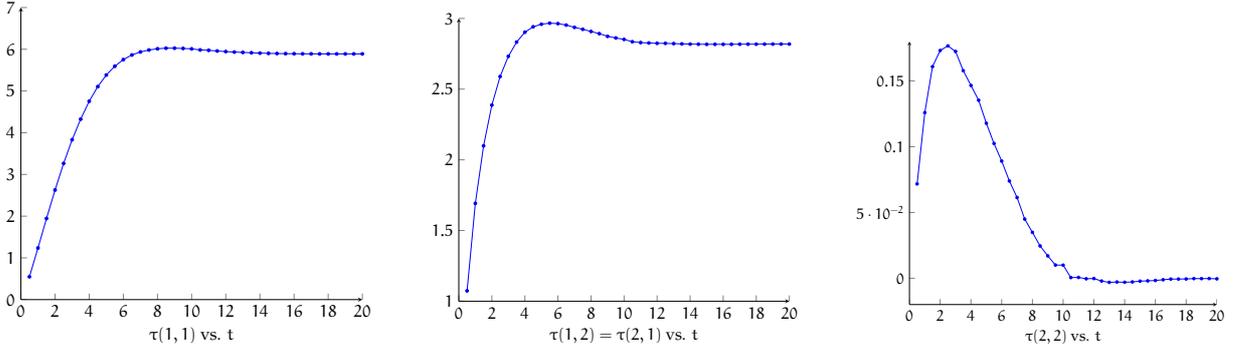

\centering
\resizebox{0.32\textwidth}{!}{\input{./Pic/pgfart.sty} \def\fpplt{1} 

\begin{tikzpicture}

\ifnum\fpplt=1
 \begin{axis}[%
  xmode=normal,ymode=normal,
  color=black,
  mark=*,
  mark size=1pt,
  xmin=0, xmax=20,
  ymin=0, ymax=7,
  yminorticks=true,
  xlabel={$\tau(1,1)$ vs. $t$}
]
\addplot+[] table[header=true, x=t, y=tau11]{./box/dat/fp8_tts.dat};
\fi

\ifnum\fpplt=2
 \begin{axis}[%
  xmode=normal,ymode=normal,
  color=black,
  mark=*,
  mark size=1pt,
  xmin=0, xmax=20,
  ymin=1, ymax=3,
  yminorticks=true,
  xlabel={$\tau(1,2)=\tau(2,1)$ vs. $t$}
]
\addplot+[] table[header=true, x=t, y=tau12]{./box/dat/fp8_tts.dat};
\fi

\ifnum\fpplt=4
 \begin{axis}[%
  xmode=normal,ymode=normal,
  color=black,
  mark=*,
  mark size=1pt,
  xmin=0, xmax=20,
  ymin=-0.02, ymax=0.18,
  yminorticks=true,
  xlabel={$\tau(2,2)$ vs. $t$}
 ]
\addplot+[] table[header=true, x=t, y=tau22]{./box/dat/fp8_tts.dat};
\fi

\end{axis}
\end{tikzpicture}
\resizebox{0.32\textwidth}{!}{\input{./Pic/pgfart.sty} \def\fpplt{2} 

\begin{tikzpicture}

\ifnum\fpplt=1
 \begin{axis}[%
  xmode=normal,ymode=normal,
  color=black,
  mark=*,
  mark size=1pt,
  xmin=0, xmax=20,
  ymin=0, ymax=7,
  yminorticks=true,
  xlabel={$\tau(1,1)$ vs. $t$}
]
\addplot+[] table[header=true, x=t, y=tau11]{./box/dat/fp8_tts.dat};
\fi

\ifnum\fpplt=2
 \begin{axis}[%
  xmode=normal,ymode=normal,
  color=black,
  mark=*,
  mark size=1pt,
  xmin=0, xmax=20,
  ymin=1, ymax=3,
  yminorticks=true,
  xlabel={$\tau(1,2)=\tau(2,1)$ vs. $t$}
]
\addplot+[] table[header=true, x=t, y=tau12]{./box/dat/fp8_tts.dat};
\fi

\ifnum\fpplt=4
 \begin{axis}[%
  xmode=normal,ymode=normal,
  color=black,
  mark=*,
  mark size=1pt,
  xmin=0, xmax=20,
  ymin=-0.02, ymax=0.18,
  yminorticks=true,
  xlabel={$\tau(2,2)$ vs. $t$}
 ]
\addplot+[] table[header=true, x=t, y=tau22]{./box/dat/fp8_tts.dat};
\fi

\end{axis}
\end{tikzpicture}
\resizebox{0.32\textwidth}{!}{\input{./Pic/pgfart.sty} \def\fpplt{4} 

\begin{tikzpicture}

\ifnum\fpplt=1
 \begin{axis}[%
  xmode=normal,ymode=normal,
  color=black,
  mark=*,
  mark size=1pt,
  xmin=0, xmax=20,
  ymin=0, ymax=7,
  yminorticks=true,
  xlabel={$\tau(1,1)$ vs. $t$}
]
\addplot+[] table[header=true, x=t, y=tau11]{./box/dat/fp8_tts.dat};
\fi

\ifnum\fpplt=2
 \begin{axis}[%
  xmode=normal,ymode=normal,
  color=black,
  mark=*,
  mark size=1pt,
  xmin=0, xmax=20,
  ymin=1, ymax=3,
  yminorticks=true,
  xlabel={$\tau(1,2)=\tau(2,1)$ vs. $t$}
]
\addplot+[] table[header=true, x=t, y=tau12]{./box/dat/fp8_tts.dat};
\fi

\ifnum\fpplt=4
 \begin{axis}[%
  xmode=normal,ymode=normal,
  color=black,
  mark=*,
  mark size=1pt,
  xmin=0, xmax=20,
  ymin=-0.02, ymax=0.18,
  yminorticks=true,
  xlabel={$\tau(2,2)$ vs. $t$}
 ]
\addplot+[] table[header=true, x=t, y=tau22]{./box/dat/fp8_tts.dat};
\fi

\end{axis}
\end{tikzpicture}
\caption{Components of the stress tensor vs. time}
\label{fig:stress}
\end{figure}
\begin{figure}[t]
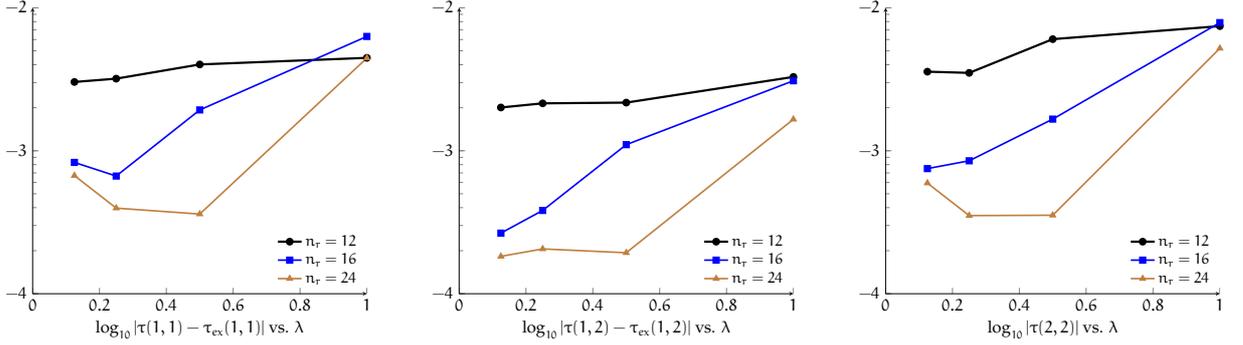

\centering
\resizebox{0.32\textwidth}{!}{\input{./Pic/pgfart.sty} \def\fpplt{2} 

\begin{tikzpicture}

\ifnum\fpplt=0
 \begin{axis}[%
  xmode=normal,ymode=log,
  cycle list name=amen,
  xmin=0, xmax=1,
  ymin=1e2, ymax=1e4,
  yminorticks=true]
\fi

\ifnum\fpplt>0
 \begin{axis}[%
  xmode=normal,ymode=log,
  cycle list name=amen,
  xmin=0, xmax=1,
  ymin=1e-4, ymax=1e-2,
  yminorticks=true,
  legend style={at={(.99,0.01)},anchor=south east}]
\fi

\ifnum\fpplt=0
  \pgfplotsset{xlabel/.add={$\log_{10}\mathrm{time,~sec}$ vs. $\lambda$}}
  \addplot+[] table[header=true, x=lambda, y=t_12]{./box/dat/fp8.dat}; \addlegendentry{$n_r=12$};
  \addplot+[] table[header=true, x=lambda, y=t_16]{./box/dat/fp8.dat}; \addlegendentry{$n_r=16$};
  \addplot+[] table[header=true, x=lambda, y=t_24]{./box/dat/fp8.dat}; \addlegendentry{$n_r=24$};
\fi

\ifnum\fpplt=1
  \pgfplotsset{xlabel/.add={$\|A\psi\|/\|M\psi\|$ vs. $\lambda$}}
  \addplot+[] table[header=true, x=lambda, y=au_12]{./box/dat/fp8.dat};  \addlegendentry{$n_r=12$};
  \addplot+[] table[header=true, x=lambda, y=au_16]{./box/dat/fp8.dat}; \addlegendentry{$n_r=16$};
  \addplot+[] table[header=true, x=lambda, y=au_24]{./box/dat/fp8.dat}; \addlegendentry{$n_r=24$};
\fi

\ifnum\fpplt=2
  \pgfplotsset{xlabel/.add={$\log_{10} |\tau(1,1)-\tau_{ex}(1,1)|$ vs. $\lambda$}}
  \addplot+[] table[header=true, x=lambda, y=tau1_12]{./box/dat/fp8.dat}; \addlegendentry{$n_r=12$};
  \addplot+[] table[header=true, x=lambda, y=tau1_16]{./box/dat/fp8.dat}; \addlegendentry{$n_r=16$};
  \addplot+[] table[header=true, x=lambda, y=tau1_24]{./box/dat/fp8.dat}; \addlegendentry{$n_r=24$};
\fi

\ifnum\fpplt=3
  \pgfplotsset{xlabel/.add={$\log_{10} |\tau(1,2)-\tau_{ex}(1,2)|$ vs. $\lambda$}}
  \addplot+[] table[header=true, x=lambda, y=tau2_12]{./box/dat/fp8.dat}; \addlegendentry{$n_r=12$};
  \addplot+[] table[header=true, x=lambda, y=tau2_16]{./box/dat/fp8.dat}; \addlegendentry{$n_r=16$};
  \addplot+[] table[header=true, x=lambda, y=tau2_24]{./box/dat/fp8.dat}; \addlegendentry{$n_r=24$};
\fi

\ifnum\fpplt=4
  \pgfplotsset{xlabel/.add={$\log_{10} |\tau(2,2)|$ vs. $\lambda$}}
  \addplot+[] table[header=true, x=lambda, y=tau4_12]{./box/dat/fp8.dat}; \addlegendentry{$n_r=12$};
  \addplot+[] table[header=true, x=lambda, y=tau4_16]{./box/dat/fp8.dat}; \addlegendentry{$n_r=16$};
  \addplot+[] table[header=true, x=lambda, y=tau4_24]{./box/dat/fp8.dat}; \addlegendentry{$n_r=24$};
\fi
\end{axis}
\end{tikzpicture}
\resizebox{0.32\textwidth}{!}{\input{./Pic/pgfart.sty} \def\fpplt{3} 

\begin{tikzpicture}

\ifnum\fpplt=0
 \begin{axis}[%
  xmode=normal,ymode=log,
  cycle list name=amen,
  xmin=0, xmax=1,
  ymin=1e2, ymax=1e4,
  yminorticks=true]
\fi

\ifnum\fpplt>0
 \begin{axis}[%
  xmode=normal,ymode=log,
  cycle list name=amen,
  xmin=0, xmax=1,
  ymin=1e-4, ymax=1e-2,
  yminorticks=true,
  legend style={at={(.99,0.01)},anchor=south east}]
\fi

\ifnum\fpplt=0
  \pgfplotsset{xlabel/.add={$\log_{10}\mathrm{time,~sec}$ vs. $\lambda$}}
  \addplot+[] table[header=true, x=lambda, y=t_12]{./box/dat/fp8.dat}; \addlegendentry{$n_r=12$};
  \addplot+[] table[header=true, x=lambda, y=t_16]{./box/dat/fp8.dat}; \addlegendentry{$n_r=16$};
  \addplot+[] table[header=true, x=lambda, y=t_24]{./box/dat/fp8.dat}; \addlegendentry{$n_r=24$};
\fi

\ifnum\fpplt=1
  \pgfplotsset{xlabel/.add={$\|A\psi\|/\|M\psi\|$ vs. $\lambda$}}
  \addplot+[] table[header=true, x=lambda, y=au_12]{./box/dat/fp8.dat};  \addlegendentry{$n_r=12$};
  \addplot+[] table[header=true, x=lambda, y=au_16]{./box/dat/fp8.dat}; \addlegendentry{$n_r=16$};
  \addplot+[] table[header=true, x=lambda, y=au_24]{./box/dat/fp8.dat}; \addlegendentry{$n_r=24$};
\fi

\ifnum\fpplt=2
  \pgfplotsset{xlabel/.add={$\log_{10} |\tau(1,1)-\tau_{ex}(1,1)|$ vs. $\lambda$}}
  \addplot+[] table[header=true, x=lambda, y=tau1_12]{./box/dat/fp8.dat}; \addlegendentry{$n_r=12$};
  \addplot+[] table[header=true, x=lambda, y=tau1_16]{./box/dat/fp8.dat}; \addlegendentry{$n_r=16$};
  \addplot+[] table[header=true, x=lambda, y=tau1_24]{./box/dat/fp8.dat}; \addlegendentry{$n_r=24$};
\fi

\ifnum\fpplt=3
  \pgfplotsset{xlabel/.add={$\log_{10} |\tau(1,2)-\tau_{ex}(1,2)|$ vs. $\lambda$}}
  \addplot+[] table[header=true, x=lambda, y=tau2_12]{./box/dat/fp8.dat}; \addlegendentry{$n_r=12$};
  \addplot+[] table[header=true, x=lambda, y=tau2_16]{./box/dat/fp8.dat}; \addlegendentry{$n_r=16$};
  \addplot+[] table[header=true, x=lambda, y=tau2_24]{./box/dat/fp8.dat}; \addlegendentry{$n_r=24$};
\fi

\ifnum\fpplt=4
  \pgfplotsset{xlabel/.add={$\log_{10} |\tau(2,2)|$ vs. $\lambda$}}
  \addplot+[] table[header=true, x=lambda, y=tau4_12]{./box/dat/fp8.dat}; \addlegendentry{$n_r=12$};
  \addplot+[] table[header=true, x=lambda, y=tau4_16]{./box/dat/fp8.dat}; \addlegendentry{$n_r=16$};
  \addplot+[] table[header=true, x=lambda, y=tau4_24]{./box/dat/fp8.dat}; \addlegendentry{$n_r=24$};
\fi
\end{axis}
\end{tikzpicture}
\resizebox{0.32\textwidth}{!}{\input{./Pic/pgfart.sty} \def\fpplt{4} 

\begin{tikzpicture}

\ifnum\fpplt=0
 \begin{axis}[%
  xmode=normal,ymode=log,
  cycle list name=amen,
  xmin=0, xmax=1,
  ymin=1e2, ymax=1e4,
  yminorticks=true]
\fi

\ifnum\fpplt>0
 \begin{axis}[%
  xmode=normal,ymode=log,
  cycle list name=amen,
  xmin=0, xmax=1,
  ymin=1e-4, ymax=1e-2,
  yminorticks=true,
  legend style={at={(.99,0.01)},anchor=south east}]
\fi

\ifnum\fpplt=0
  \pgfplotsset{xlabel/.add={$\log_{10}\mathrm{time,~sec}$ vs. $\lambda$}}
  \addplot+[] table[header=true, x=lambda, y=t_12]{./box/dat/fp8.dat}; \addlegendentry{$n_r=12$};
  \addplot+[] table[header=true, x=lambda, y=t_16]{./box/dat/fp8.dat}; \addlegendentry{$n_r=16$};
  \addplot+[] table[header=true, x=lambda, y=t_24]{./box/dat/fp8.dat}; \addlegendentry{$n_r=24$};
\fi

\ifnum\fpplt=1
  \pgfplotsset{xlabel/.add={$\|A\psi\|/\|M\psi\|$ vs. $\lambda$}}
  \addplot+[] table[header=true, x=lambda, y=au_12]{./box/dat/fp8.dat};  \addlegendentry{$n_r=12$};
  \addplot+[] table[header=true, x=lambda, y=au_16]{./box/dat/fp8.dat}; \addlegendentry{$n_r=16$};
  \addplot+[] table[header=true, x=lambda, y=au_24]{./box/dat/fp8.dat}; \addlegendentry{$n_r=24$};
\fi

\ifnum\fpplt=2
  \pgfplotsset{xlabel/.add={$\log_{10} |\tau(1,1)-\tau_{ex}(1,1)|$ vs. $\lambda$}}
  \addplot+[] table[header=true, x=lambda, y=tau1_12]{./box/dat/fp8.dat}; \addlegendentry{$n_r=12$};
  \addplot+[] table[header=true, x=lambda, y=tau1_16]{./box/dat/fp8.dat}; \addlegendentry{$n_r=16$};
  \addplot+[] table[header=true, x=lambda, y=tau1_24]{./box/dat/fp8.dat}; \addlegendentry{$n_r=24$};
\fi

\ifnum\fpplt=3
  \pgfplotsset{xlabel/.add={$\log_{10} |\tau(1,2)-\tau_{ex}(1,2)|$ vs. $\lambda$}}
  \addplot+[] table[header=true, x=lambda, y=tau2_12]{./box/dat/fp8.dat}; \addlegendentry{$n_r=12$};
  \addplot+[] table[header=true, x=lambda, y=tau2_16]{./box/dat/fp8.dat}; \addlegendentry{$n_r=16$};
  \addplot+[] table[header=true, x=lambda, y=tau2_24]{./box/dat/fp8.dat}; \addlegendentry{$n_r=24$};
\fi

\ifnum\fpplt=4
  \pgfplotsset{xlabel/.add={$\log_{10} |\tau(2,2)|$ vs. $\lambda$}}
  \addplot+[] table[header=true, x=lambda, y=tau4_12]{./box/dat/fp8.dat}; \addlegendentry{$n_r=12$};
  \addplot+[] table[header=true, x=lambda, y=tau4_16]{./box/dat/fp8.dat}; \addlegendentry{$n_r=16$};
  \addplot+[] table[header=true, x=lambda, y=tau4_24]{./box/dat/fp8.dat}; \addlegendentry{$n_r=24$};
\fi
\end{axis}
\end{tikzpicture}
\caption{Accuracy of the stress tensor vs. time step and grid size}
\label{fig:stress_acc}
\end{figure}

We consider the case of 4 two-dimensional \emph{finitely extensible nonlinear elastic} (FENE) springs in the shear flow regime according to \cite{Ammar-greedy_fluid_model-2006},
\begin{equation}
\frac{\partial \psi (\mathbf{q},t)}{\partial t} = \sum\limits_{i=1}^{d} \nabla_{\mathbf{q}_i} \cdot \left(\mathbf{K} \mathbf{q} \psi(\mathbf{q},t) -  \sum\limits_{j=1}^{d} D_{ij} \left(\mathbf{F}_j(\mathbf{q}) \psi(\mathbf{q},t) - \nabla_{\mathbf{q}_j} \psi(\mathbf{q},t)\right) \right),
\label{eq:fpe}
\end{equation}
where $\mathbf{q} = (\mathbf{q}_1,...,\mathbf{q}_8) = (q_{1,1}, q_{1,2},...,q_{4,2})$ is the stacked spring extension vectors ($q_{p,k}$ is the displacement of the $p$-th spring in the $k$-th direction),
$$
D =  \frac{1}{4}\mathrm{tridiag}\{-1,2,-1\} \otimes I_2 =  \frac{1}{4} \begin{bmatrix}2 & -1 \\ -1 & 2 & -1 \\ & -1 & 2 & -1 \\ & & -1 & 2\end{bmatrix}  \otimes \begin{bmatrix}1 \\ & 1\end{bmatrix}
$$
is a spring interaction tensor,
$$
\mathbf{K} = I_4 \otimes (\nabla_x\mathbf{u}) = \mathrm{diag}\{1,1,1,1\}\otimes \begin{bmatrix}0 & 0.8 \\ 0 & 0\end{bmatrix}
$$
is a flow velocity gradient (shear flow case),
and
$$
\mathbf{F}_j(\mathbf{q}) = \dfrac{\mathbf{q}_j}{1-|\mathbf{q}_j|^2/b}, \quad |\mathbf{q}_j|^2 = q_{j,1}^2+q_{j,2}^2, \quad b=5,
$$
is the FENE spring force.
Note that the singularity in $\mathbf{F}_j$ limits the maximal length of a spring to $\sqrt{b}$.
Moreover, the probability density $\psi$ at the point $|\mathbf{q}_j|=\sqrt{b}$ (and any with larger modulus) is zero.
Therefore, the domain shrinks to the product of balls $B=\mathbb{B}_{\sqrt{b}}^{\otimes d}.$

\begin{figure}[t]
\centering
\resizebox{0.4\textwidth}{!}{\input{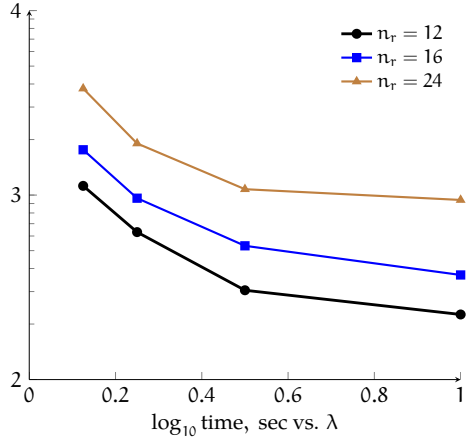} \def\fpplt{0} 

\begin{tikzpicture}

\ifnum\fpplt=0
 \begin{axis}[%
  xmode=normal,ymode=log,
  cycle list name=amen,
  xmin=0, xmax=1,
  ymin=1e2, ymax=1e4,
  yminorticks=true]
\fi

\ifnum\fpplt>0
 \begin{axis}[%
  xmode=normal,ymode=log,
  cycle list name=amen,
  xmin=0, xmax=1,
  ymin=1e-4, ymax=1e-2,
  yminorticks=true,
  legend style={at={(.99,0.01)},anchor=south east}]
\fi

\ifnum\fpplt=0
  \pgfplotsset{xlabel/.add={$\log_{10}\mathrm{time,~sec}$ vs. $\lambda$}}
  \addplot+[] table[header=true, x=lambda, y=t_12]{./box/dat/fp8.dat}; \addlegendentry{$n_r=12$};
  \addplot+[] table[header=true, x=lambda, y=t_16]{./box/dat/fp8.dat}; \addlegendentry{$n_r=16$};
  \addplot+[] table[header=true, x=lambda, y=t_24]{./box/dat/fp8.dat}; \addlegendentry{$n_r=24$};
\fi

\ifnum\fpplt=1
  \pgfplotsset{xlabel/.add={$\|A\psi\|/\|M\psi\|$ vs. $\lambda$}}
  \addplot+[] table[header=true, x=lambda, y=au_12]{./box/dat/fp8.dat};  \addlegendentry{$n_r=12$};
  \addplot+[] table[header=true, x=lambda, y=au_16]{./box/dat/fp8.dat}; \addlegendentry{$n_r=16$};
  \addplot+[] table[header=true, x=lambda, y=au_24]{./box/dat/fp8.dat}; \addlegendentry{$n_r=24$};
\fi

\ifnum\fpplt=2
  \pgfplotsset{xlabel/.add={$\log_{10} |\tau(1,1)-\tau_{ex}(1,1)|$ vs. $\lambda$}}
  \addplot+[] table[header=true, x=lambda, y=tau1_12]{./box/dat/fp8.dat}; \addlegendentry{$n_r=12$};
  \addplot+[] table[header=true, x=lambda, y=tau1_16]{./box/dat/fp8.dat}; \addlegendentry{$n_r=16$};
  \addplot+[] table[header=true, x=lambda, y=tau1_24]{./box/dat/fp8.dat}; \addlegendentry{$n_r=24$};
\fi

\ifnum\fpplt=3
  \pgfplotsset{xlabel/.add={$\log_{10} |\tau(1,2)-\tau_{ex}(1,2)|$ vs. $\lambda$}}
  \addplot+[] table[header=true, x=lambda, y=tau2_12]{./box/dat/fp8.dat}; \addlegendentry{$n_r=12$};
  \addplot+[] table[header=true, x=lambda, y=tau2_16]{./box/dat/fp8.dat}; \addlegendentry{$n_r=16$};
  \addplot+[] table[header=true, x=lambda, y=tau2_24]{./box/dat/fp8.dat}; \addlegendentry{$n_r=24$};
\fi

\ifnum\fpplt=4
  \pgfplotsset{xlabel/.add={$\log_{10} |\tau(2,2)|$ vs. $\lambda$}}
  \addplot+[] table[header=true, x=lambda, y=tau4_12]{./box/dat/fp8.dat}; \addlegendentry{$n_r=12$};
  \addplot+[] table[header=true, x=lambda, y=tau4_16]{./box/dat/fp8.dat}; \addlegendentry{$n_r=16$};
  \addplot+[] table[header=true, x=lambda, y=tau4_24]{./box/dat/fp8.dat}; \addlegendentry{$n_r=24$};
\fi
\end{axis}
\end{tikzpicture}
\caption{CPU time vs. time step and grid size}
\label{fig:fpe_ttimes}
\end{figure}

A quantity of interest is the average polymeric contribution to the stress tensor,
\begin{equation}
\tau(t) = \sum\limits_{i=1}^d \left\langle \mathbf{q}_i \mathbf{F}_i(\mathbf{q})^{\top} - I_2 \right\rangle = \sum\limits_{i=1}^d \int_B \psi(\mathbf{q},t) \mathbf{q}_i \mathbf{F}_i(\mathbf{q})^{\top} d \mathbf{q} - d\cdot I_2,
\label{eq:stress}
\end{equation}
with the normalization assumption $\int \psi(\mathbf{q},t) d \mathbf{q} = 1$.

To recast the problem domain into a hypercube, the polar coordinates are employed,
$\mathbf{q} \rightarrow (r_1,\theta_1,...,r_4,\theta_4) \in \left([0,\sqrt{b}) \otimes [0,2\pi)\right)^{\otimes 4}$.
The discretization is done via the spectral elements method (see e.g. \cite{trefethen-spectral-2000}).
We will vary the number of spectral elements in each radial direction $n_r$, but the number of angular elements (in $\theta_i$) is fixed to $2 n_r$.
With typical values $n_r \sim 20$, we end up with tensors of size $\sim 10^{12}$ and dense populated matrices, which are intractable in the full format.
Since the spectral differentiation matrices are found to be incompressible in the QTT format, the 8-dimensional TT representation is used.

We would like to compute the stationary state of \eqref{eq:fpe}, so we use the simple implicit Euler (inverse power) method as the time discretization,
$$
(M+\lambda A) \psi(t_{m+1}) = M \psi(t_m), \quad t_m = \lambda m, ~~m=0,...,N_t-1,
$$
where $M$ is the mass matrix, $A$ is the stiffness matrix.
The time integration was performed until $T = \lambda N_t = 20$, which is enough to approximate the steady state with a satisfactory accuracy, and the (unnormalized) initial state was chosen $\psi(\mathbf{q},0)=\bigotimes_{i=1}^4 (1-|\mathbf{q}_i|^2/b)^{b/2}$, which corresponds to the zero velocity gradient $\mathbf{K}=0$.
Since $\lambda$ is not a ``time step'' but a parameter of the inverse power method, we will check the performance w.r.t. $\lambda$ as well.

In the previous example we have observed that the AMEn+SVD method is in fact superfluous, since the AMEn+ALS method delivers the same accuracy with lower cost.
Both mode sizes (up to $48$) and TT--ranks (up to $73$) in this example are relatively large,
so we will consider only the AMEn+ALS.
We set the Frobenius-norm threshold to $\mathtt{tol}=10^{-4}$, and the enrichment rank $\rho=3$.
The initial guess for the AMEn+ALS method is taken from the previous Euler step.

First, let us track the evolution of the stress tensor components \eqref{eq:stress} versus Euler iterations, see Fig. \ref{fig:stress}.
We see that the stress does really stabilize in the chosen time range.
Moreover, the last component tends to zero, and can therefore be used as an in-hand measure of the accuracy.
In addition, we compare $\tau(1,1)$ and $\tau(1,2)$ with the reference values computed with $n_r=28$ and $\mathtt{tol}=10^{-5}$, see Fig. \ref{fig:stress_acc}.
For all $\lambda$ except $1$ (which is too large), and $n_r=24$, the accuracy attained is of the order $10^{-4} \div 10^{-3}$.
Note that  typical accuracies of greedy or MC methods for many-spring models are of the order $10^{-1}$ \cite{Ammar-greedy_fluid_model-2006,Venkiteswaran-QMC_FP-2005}.

Finally, the computational times can be seen in Fig. \ref{fig:fpe_ttimes}.
As expected, the complexity increases quadratically with the number of spectral elements $n_r$.
An interesting feature is that the total CPU time decays with increasing $\lambda$.
It points out that the performance of the AMEn method depends weakly on $\lambda$, and henceforth on the matrix spectrum.  
On the contrary, the quality of the initial guess (in terms of both ranks and accuracy) is crucial.
This may motivate attempts to relate the AMEn methods to Newton or Krylov iterations in a future research.

\newpage
\section{Conclusion}\label{sec:conclusion}
In this paper we develop a new version of the fast rank--adaptive solver for tensor--structured symmetric positive definite linear systems in higher dimensions.
Similarly to the algorithms from~\cite{ds-amr1-2013}, the proposed AMEn method combines the one-dimensional local updates with the steps where the basis is expanded using the information about the global residual of the high--dimensional problem.
However, in AMEn the same steps are ordered in such a way that only one or two neighboring cores are modified at once.
Both methods from~\cite{ds-amr1-2013} and the AMEn converge globally, and the convergence rate is established w.r.t. the one of the steepest descent algorithm.
The practical convergence in the numerical experiments is significantly faster than the theoretical estimate.
The AMEn algorithm appears to be more accurate in practical computations than the previously known methods, especially if local problems are solved roughly.

The asymptotic complexity of the AMEn is linear in the dimension and mode size, similarly to the algorithms from~\cite{ds-amr1-2013}.
The complexity w.r.t. the rank parameter is sufficiently improved taking into the account that a limiting step is the approximation of the residual, where the high accuracy is not always essential for the convergence of the whole method.
We propose several cheaper alternatives to the SVD-based TT-approximation, namely the Cholesky decomposition and the inner ALS algorithm.
The ALS approach provides a significant speedup, while maintaining almost the same convergence of the algorithm.

Finally, we apply the developed AMEn algorithm to general (non-SPD) systems, which arise from high--dimensional Fokker--Planck and chemical master equations.
Theoretical convergence analysis can be made similarly to the FOM method, which is rather pessimistic and puts very strong requirements on the matrix spectrum.
In numerical experiments we observe a surprisingly fast convergence, even for strongly non--symmetric systems.
Here the AMEn demonstrates a significant advantage over the DMRG technique, which is known to stagnate, especially in high dimensions, see~\cite{Os-mvk2-2011,dc-tt_gmres-2011} and Fig.~\ref{fig:casc_amen_symm}.

There are many directions of a further research based on the ideas of~\cite{ds-amr1-2013} and  this paper.

First, the ideas developed in this paper can be generalized to other problems, e.g. finding the ground state of a many-body quantum system or a particular state close to a prescribed energy.
The combination of update and basis enrichment steps looks very promising for a wide class of problems, as soon as the corresponding classical iterative algorithms can be adapted to provide a proper basis expansion in higher dimensions.
A huge work is done in the community of greedy approximation methods, where the cornerstone is a subsequent rank-one update of the solution.

Second, there is a certain mismatch between the theoretical convergence estimates, which are at the level of the one--step steepest descent algorithm, and the practical convergence pattern, which looks more like the one of the GMRES.
This indicates that there are further possibilities to improve our understanding of the convergence of the AMEn and similar methods.
Our rates can benefit from sharp estimates of the progress of the one-dimensional update steps, which at the moment are available only in a small vicinity of a true solution, which is hard to satisfy in practice, see~\cite{ushmaev-tt-2013}.
The superlinear convergence observed in numerical experiments inspires us to look for possible connections with the theory of Krylov--type iterative methods and a family of Newton methods.

Finally, we look forward to solving more high--dimensional problems, and are sure that they will bring new understanding of the advantages and drawbacks of the proposed method, and new questions and directions for a future research.

\newpage

\newpage
\appendix
\section{FOM theory}

\def\ang#1#2{(\widehat{{#1},{\:}{#2}})}

As was observed in the numerical experiments, the AMEn method works successfully even being applied directly to non-symmetric systems.
Though we cannot support this behavior with sharp estimates, one may proceed similarly to Section \ref{sec:amen}, and establish a formal theory, relating the AMEn to the Full Orthogonalization Method.

\subsection{Galerkin projection and angles between subspaces}
Like in the SPD case, we begin the analysis from the two-dimensional case.
Given a linear system $Ax=y$ and some basis $V$, the projection method is performed as follows,
\begin{equation}\label{eq:gal_oblique}
x = V w = V (V^\trans A V)^{-1} V^\trans y, \qquad y-Ax = (I-AV (V^\trans A V)^{-1} V^\trans)y.
\end{equation}
Given an initial guess $t$, we assume $t \in \mathrm{span}(V)$, and $z=y-At \in \mathrm{span}(V)$.
Then it holds also $y-Ax = (I-AV (V^\trans A V)^{-1} V^\trans)z$.

So, \eqref{eq:gal_oblique} performs an oblique projection of the residual.
Its analysis is often conducted with the help of the orthogonal projection,
$$
((AV)^\trans AV)w = (AV)^\trans z, \qquad y-Ax = (I-AV(V^\trans A^\trans A V)^{-1} (AV)^\trans)z = (I-P_{AV})z,
$$
i.e. the residual minimization on $V$.
The case $V=z$ is known as the MINRES method. Its convergence was analysed in e.g. \cite{saad-iter},
$$
\|y-Ax\| = \sin\ang{Az}{z} \|z\|, \quad \cos\ang{Az}{z} = \frac{|(Az,z)|}{\|Az\| \|z\|},
$$
i.e. $\ang{Az}{z}$ is the acute angle between $Az$ and $z$.
The worst convergence rate is estimated as
$$
\omega_{MR} = \max\limits_{z} \sin\ang{Az}{z}, \quad \sqrt{1-\omega_{MR}^2} = \min\limits_{z \neq 0}\frac{|(Az,z)|}{\|Az\| \|z\|},
$$
and for a positive definite matrix is guaranteed to be less than 1.
The same approach may be used for the block case as well,
$$
\|y-Ax\| = \sin\ang{AV}{z} \|z\|, \quad \cos\ang{AV}{z} = \max_{q \neq 0}\frac{|(AVq, z)|}{\|AVq\| \|z\|}.
$$
Obviously, if $z \in \mathrm{span}(V)$, it holds $\sin\ang{AV}{z} \le \sin\ang{Az}{z}<1$.

Unfortunately, for the oblique projection \eqref{eq:gal_oblique} one cannot guarantee the monotonous convergence in general.
However, assuming a certain well-conditioning of the system, we may relate the old and new residuals by a factor smaller than 1 as well.
\begin{lemma}\label{lem:fom}
Given a column-orthogonal matrix $V$, initial guess $t \in \mathrm{span}(V)$.
Assume the smallest eigenvalue $\mu = \lambda_{\min}(V^\trans A V+V^\trans A^\trans V)/2>0$,
and $\|z-VV^\trans z\| \le \eps \|z\|$.
Denote $\frac{\mu}{\|AV\|} = \sqrt{1-\omega_V^2}$.
Then, the progress of \eqref{eq:gal_oblique} is bounded by
$$
\|y-Ax\| \le \left(\eps + \frac{\omega_V}{\sqrt{1-\omega_V^2}} \sqrt{1-\eps^2}\right)\|z\|.
$$
\end{lemma}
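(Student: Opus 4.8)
The plan is to follow the two–dimensional template of Section~\ref{sec:amen}: express the new residual as an orthogonal–complement projection of the old one, split $z$ into its component in $\mathrm{span}(V)$ and a small orthogonal remainder, and estimate the two pieces separately. Write $r=y-Ax$ for the new residual and recall $z=y-At$. Since $t\in\mathrm{span}(V)$ one has $x=t+V(V^\trans AV)^{-1}V^\trans z$, hence $r=z-AV(V^\trans AV)^{-1}V^\trans z$ (this is the oblique–projection formula \eqref{eq:gal_oblique}, which needs only $t\in\mathrm{span}(V)$, not $z\in\mathrm{span}(V)$). Now set $\zeta=V^\trans z$ and $w=(I-VV^\trans)z$, so that $z=V\zeta+w$, $V^\trans w=0$, $\|\zeta\|^2+\|w\|^2=\|z\|^2$, and $\|w\|\le\eps\|z\|$ by hypothesis. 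With $q=(V^\trans AV)^{-1}\zeta$ (so $V^\trans AV\,q=\zeta$) and using $V^\trans w=0$,
\begin{equation}\nonumber
 r = V\zeta + w - AVq = w - (I-VV^\trans)AVq,
\end{equation}
because $V\zeta=VV^\trans AVq$ forces $V\zeta-AVq=-(I-VV^\trans)AVq$.

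Next I would bound the two summands. By the triangle inequality $\|r\|\le\|w\|+\|(I-VV^\trans)AVq\|$, and $\|w\|\le\eps\|z\|$ is given. Since $I-VV^\trans$ is the orthogonal projector onto $\mathrm{span}(V)^\perp$, $V$ is column–orthogonal, and $V^\trans(AVq)=\zeta$, Pythagoras gives $\|(I-VV^\trans)AVq\|^2=\|AVq\|^2-\|VV^\trans AVq\|^2=\|AVq\|^2-\|\zeta\|^2$. Here $\|AVq\|\le\|AV\|\,\|q\|$, and the assumption $\mu=\lambda_{\min}(V^\trans AV+V^\trans A^\trans V)/2>0$ means $\Re(p,V^\trans AV\,p)\ge\mu\|p\|^2$ for all $p$, whence $\|V^\trans AV\,p\|\ge\mu\|p\|$ and $\|q\|\le\|\zeta\|/\mu$. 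Combining these with $\mu/\|AV\|=\sqrt{1-\omega_V^2}$,
\begin{equation}\nonumber
 \|(I-VV^\trans)AVq\|^2 \le \Bigl(\tfrac{\|AV\|^2}{\mu^2}-1\Bigr)\|\zeta\|^2 = \frac{\omega_V^2}{1-\omega_V^2}\,\|\zeta\|^2,
\end{equation}
so that $\|r\|\le\|w\|+\dfrac{\omega_V}{\sqrt{1-\omega_V^2}}\sqrt{\|z\|^2-\|w\|^2}$.

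It then remains to maximize the right–hand side over $\|w\|\in[0,\eps\|z\|]$, and this is the only non-routine step — it is what produces the $\sqrt{1-\eps^2}$ factor. The function $a\mapsto a+c\sqrt{\|z\|^2-a^2}$ with $c=\omega_V/\sqrt{1-\omega_V^2}$ is increasing on $[0,\|z\|/\sqrt{1+c^2}\,]=[0,(\mu/\|AV\|)\|z\|\,]$, so as soon as $\eps\le\mu/\|AV\|=\sqrt{1-\omega_V^2}$ — precisely the regime in which the resulting estimate is a genuine contraction below $1$ — the maximum is attained at $\|w\|=\eps\|z\|$, giving $\|r\|\le\bigl(\eps+\frac{\omega_V}{\sqrt{1-\omega_V^2}}\sqrt{1-\eps^2}\bigr)\|z\|$ as claimed. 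I would note in passing that this is the FOM counterpart of the chain $\omega_\X\le\omega_\Z\le\omega_{\tilde z}$ from Theorem~\ref{thm:amen2}: the orthogonal projector $I-VV^\trans$ replaces $I-R_\X$, and $\mu/\|AV\|$ plays the role of the spectral gap controlling $\omega_{\tilde z}$.
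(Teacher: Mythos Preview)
Your argument is correct and follows essentially the same route as the paper: you derive the identical decomposition $y-Ax=(I-VV^\trans)z-(I-VV^\trans)AVq$ and bound the second summand through $\|AVq\|\le\|AV\|\,\|q\|$ together with the coercivity bound $\|V^\trans AVp\|\ge\mu\|p\|$, which is exactly what the paper's angle inequalities \eqref{eq:angles} encode. Your treatment of the $\sqrt{1-\eps^2}$ factor via an explicit maximization over $\|w\|\in[0,\eps\|z\|]$ is in fact more careful than the paper's, which tacitly uses $\|V^\trans z\|\le\sqrt{1-\eps^2}\,\|z\|$ as if $\eps$ were the exact ratio.
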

\begin{proof}
First of all, notice that $y-Ax$ is orthogonal to $V$,
$$
\begin{array}{rcl}
y-Ax & = & (VV^\trans)(I-AV (V^\trans A V)^{-1} V^\trans)z + (I-VV^\trans)(I-AV (V^\trans A V)^{-1} V^\trans)z \\
& = & (I-VV^\trans)z - (I-VV^\trans)AV (V^\trans A V)^{-1} V^\trans z.
\end{array}
$$
Then, $\|y-Ax\| \le \eps\|z\| + \sin\ang{V}{AVw} \|AVw\|$, where $w = (V^\trans A V)^{-1} V^\trans z$.
For the angle we can derive the following chain of inequalities,
\begin{equation}\label{eq:angles}
\begin{array}{rcl}
\cos\ang{V}{AVw} & = & \max\limits_{\|q\|=1} \frac{|(Vq, AVw)|}{\|AVw\|} \ge
\min\limits_{w \neq 0} \frac{\|V^\trans A V w\|}{\|AVw\|} \\
& \ge & \min\limits_{\|w\|=1}\frac{\|w^\trans V^\trans A V w\|}{\|AVw\|} \ge  \min\limits_{\|w\|=1}\frac{\|w^\trans V^\trans A V w\|}{\|AV\|},
\end{array}
\end{equation}
from which we get $\sin\ang{V}{AVw} \le \omega_V$.
On the other hand,
\begin{equation}\nonumber
 \min\limits_{\|w\|=1}\frac{\|w^\trans V^\trans A V w\|}{\|AV\|} = \frac{\lambda_{\min}((V^\trans A V+V^\trans A^\trans V)/2)}{\|AV\|} \le \frac{\sigma_{\min}(V^\trans A V)}{\|AV\|},
\end{equation}
so that $\frac{\|AV\|}{\sigma_{\min}(V^\trans A V)} \le \frac{1}{\sqrt{1-\omega_V^2}}$.
Therefore, the residual estimates as follows,
$$
\sin\ang{V}{AVw} \|AVw\| \le \omega_V \|AV\| \frac{1}{\sigma_{\min}(V^\trans A V)} \|V^\trans z\| \le \frac{\omega_V}{\sqrt{1-\omega_V^2}} \sqrt{1-\eps^2} \|z\|.
$$
\end{proof}
\begin{remark}
It holds
$$
\sqrt{1-\omega_{MR}^2} = \min\limits_{\|z\|=1}\frac{|z^\trans Az|}{\|Az\|} \le \min\limits_{\|w\|=1} \frac{\|w^\trans V^\trans A V w\|}{\|AVw\|} \le \cos\ang{V}{AVw},
$$
since the minimization over $Vw$ is a restriction w.r.t. the minimization over $z$ in the full space.
Hence, $\sin\ang{V}{AVw} \le \omega_{MR}$.
However, $\sin\ang{V}{AVw}/\sqrt{1-\omega_V^2} \ge \tan\ang{V}{AVw}$ might be greater than $\omega_{MR}$, and even greater than 1.
\end{remark}
\begin{remark}
If $V$ contains the $m$-th Krylov subspace, we obtain the so-called FOM method.
The progress of the FOM can be related to that of the GMRES as follows \cite{saad-iter},
$$
\omega_m^F = \frac{\omega_m^G}{\sqrt{1-\left(\omega_m^G/\omega_{m-1}^G\right)^2}},
$$
where $\omega^G_k$, $\omega^F_k$ are the progresses of the $k$-step GMRES and FOM, resp.
Note the similar term $\omega_V/\sqrt{1-\omega_V^2}$ in Lemma \ref{lem:fom}.
\end{remark}
\begin{remark}
The condition $\|z-VV^\trans z\| \le \eps \|z\|$ may reflect the residual approximation, i.e. $z \approx \tilde z \in \mathrm{span}(V)$, but $z \notin \mathrm{span}(V)$.
Both SVD- and ALS-based approximations (see Section \ref{sec:prac}) fit to this scheme: the SVD approximation reads $\tilde z = \tilde U \tilde U^\trans z$, where $\tilde U$ is the singular vectors, and the ALS approximation reads $\tilde z = \Z_{\neq k} \Z_{\neq k}^\trans z$.
\end{remark}

\subsection{Recurrent residual accumulation}
Lemma \ref{lem:fom} applies immediately to the two-dimensional AMEn method, by setting $V = \X_1$.
Despite the generally pessimistic estimate, it occurs in practice that $\X_1^\trans A \X_1$ is nonsingular, and moreover, $\omega_{\X_1}$ is rather small such that $\|y-Ax\| < \|z\|$ and converges rapidly.

A nice property of Theorem \ref{thm:amen} is that it itself does not rely on a particular form of $\X_k$.
We only needed that the Galerkin conditions $A_{k+1}x^{>k}=y_{k+1}$ make the error $\|x^{\geq k}_\tru-x^{\geq k}\|_{A_k}$ strictly smaller than $\|x^{\geq k}_\tru-u^{\geq k}\|_{A_k}$.

Here, we write the similar result in terms of residuals.
\begin{lemma}\label{lem:amr}
Suppose in the $k$-th step of the multidimensional AMEn method, the ALS step provides the residual decrease
$$
\|y_k - A_k u^{\geq k}\| = \mu_k \|y_k-A_k t^{\geq k}\|, \qquad \mu_k \le 1,
$$
and the exact computation of the rest cores $x^{>k}_\tru = A_{k+1}^{-1} y_{k+1}$ after the enrichment provides the residual decrease
$$
\|y_k - A_k x^{\geq k}\| = \omega_{k}  \|y_k-A_k u^{\geq k}\|, \qquad \omega_k<1,
$$
where $\omega_k \leq \omega_{\X_k}$ from Lemma \ref{lem:fom} with $\X_k=\P_1(X^{(k)},\ldots,X^{(d)})$.
Then, the total convergence rate of the AMEn method is bounded by
$$
\|y-Ax\| \le \sum\limits_{k=1}^{d-1} \omega_k \mu_k \prod\limits_{m=1}^{k-1} \frac{\mu_m}{\sqrt{1-\omega_m^2}} \cdot \|y-At\|.
$$
\end{lemma}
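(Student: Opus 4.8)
The plan is to run an induction on the dimension $d$ in parallel with the proof of Theorem~\ref{thm:amen}, the only change being that the Pythagorean splitting of the $A$-norm error there is replaced by the triangle inequality for the Euclidean residual norm, together with the conditioning estimate extracted inside the proof of Lemma~\ref{lem:fom}. The base case $d=2$ is immediate: step~5 of Alg.~\ref{alg:amen} solves the last core exactly, so $x=\X_1 x^{\geq2}_\tru$ with $x^{\geq2}_\tru=A_2^{-1}y_2$, and the claimed bound collapses to $\|y-Ax\|=\omega_1\|y-Au\|=\omega_1\mu_1\|y-At\|$, i.e.\ the $k=1$ (empty-product) term.

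For the inductive step I would use, exactly as in Theorem~\ref{thm:amen}, that one AMEn sweep in $d$ dimensions is an ALS update of the first core ($t\mapsto u$, $\|y-Au\|=\mu_1\|y-At\|$), a basis enrichment that does not change the vector, and then a recursive AMEn run on the reduced system $A_2 w=y_2$ with $A_2=\X_1^\trans A\X_1$, $y_2=\X_1^\trans y$, $\X_1=\P_1(\bar X)$ column-orthogonal (TT-orthogonality). Writing $x=\X_1 x^{\geq2}$ for the returned solution and inserting the exact inner solve $x^{\geq2}_\tru=A_2^{-1}y_2$,
\begin{equation}\nonumber
 y-Ax=\bigl(y-A\X_1 x^{\geq2}_\tru\bigr)+A\X_1\bigl(x^{\geq2}_\tru-x^{\geq2}\bigr).
\end{equation}
The first term is the residual of the exact Galerkin solve on $\Span\X_1$ and has norm $\omega_1\|y-Au\|=\omega_1\mu_1\|y-At\|$ by the hypothesis on $\omega_1$.

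For the second term I would estimate $\|A\X_1 v\|\le\bigl(\|A\X_1\|/\sigma_{\min}(\X_1^\trans A\X_1)\bigr)\|A_2 v\|\le(1-\omega_1^2)^{-1/2}\|A_2 v\|$ with $v=x^{\geq2}_\tru-x^{\geq2}$, where the last inequality is the bound $\|A\X_1\|/\sigma_{\min}(\X_1^\trans A\X_1)\le(1-\omega_{\X_1}^2)^{-1/2}$ that appears in the proof of Lemma~\ref{lem:fom} just after~\eqref{eq:angles}. Since $A_2 v=y_2-A_2 x^{\geq2}$, the second term becomes a multiple of the reduced residual, to which the induction hypothesis applies: its step constants are the $\mu_k,\omega_k$ with $k\ge2$ by construction of the recursion, so $\|y_2-A_2 x^{\geq2}\|$ is at most $\sum_{k=2}^{d-1}\omega_k\mu_k\prod_{m=2}^{k-1}\frac{\mu_m}{\sqrt{1-\omega_m^2}}$ times $\|y_2-A_2 t^{\geq2}\|$. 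The identity $\X_1 t^{\geq2}=u$ from the proof of Theorem~\ref{thm:amen} gives $y_2-A_2 t^{\geq2}=\X_1^\trans(y-Au)$, hence $\|y_2-A_2 t^{\geq2}\|\le\|y-Au\|=\mu_1\|y-At\|$. Combining the two terms by the triangle inequality and absorbing the common factor $\mu_1(1-\omega_1^2)^{-1/2}$ into the product shifts the summation index from $k\ge2$ to $k\ge1$ and yields the asserted estimate.

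The only genuinely non-routine ingredient is the conditioning step $\|A\X_1 v\|\le(1-\omega_1^2)^{-1/2}\|A_2 v\|$: it needs $\X_1^\trans A\X_1$ nonsingular --- guaranteed by $\omega_1<1$ and the positive-definiteness-type hypothesis underlying Lemma~\ref{lem:fom} --- and one must keep track of whether the factor is governed by $\omega_{\X_1}$ or by the realized residual ratio, which is precisely why the lemma is phrased with $\omega_k\le\omega_{\X_k}$ (so that $\omega_k$ may be read as $\omega_{\X_k}$ inside the $(1-\omega_m^2)^{-1/2}$ factors). Everything else --- the $d\mapsto d-1$ recursion, the identities $\X_1 t^{\geq2}=u$, $\X_1 x^{\geq2}=x$, $A_2=\X_1^\trans A\X_1$, and the telescoping reindexing --- is bookkeeping already carried out for Theorem~\ref{thm:amen}.
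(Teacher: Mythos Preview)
Your argument is correct and follows essentially the same route as the paper: the same residual splitting $y-Ax=(y-A\X_1 x^{\ge2}_\tru)+A\X_1(x^{\ge2}_\tru-x^{\ge2})$, the triangle inequality in place of the Pythagorean identity, the conditioning bound $\|A\X_1 v\|\le(1-\omega_{\X_1}^2)^{-1/2}\|\X_1^\trans A\X_1 v\|$ extracted from the chain~\eqref{eq:angles} in Lemma~\ref{lem:fom}, and the identity $\X_1 t^{\ge2}=u$ to close the recursion. Your remark that the $(1-\omega_m^2)^{-1/2}$ factors are really governed by $\omega_{\X_m}$ rather than the realized $\omega_m$ is a valid observation that the paper leaves implicit.
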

\begin{proof}
As previously, we assume that $X^{(d)}$ is computed exactly.
Then,
$$
\|y_{d-1} - A_{d-1}x^{\geq d-1}\| = \omega_{d-1}  \|y_{d-1} - A_{d-1} u^{\geq d-1}\|.
$$
The base of the recursion is proved.

Suppose the theorem holds for $A_2 x^{\geq 2} = y_2$, i.e.
\begin{equation}\label{eq:amr_res_rec}
\|y_{2} - A_2 x^{\geq 2}\| \le \sum\limits_{k=2}^{d-1} \omega_k \mu_k \prod\limits_{m=2}^{k-1} \frac{\mu_m}{\sqrt{1-\omega_m^2}} \cdot \|y_{2} - A_2 t^{\geq 2}\| = \Omega \|y_{2} - A_2 t^{\geq 2}\|,
\end{equation}
and write the total progress for the whole system. We have
\begin{equation}
y - A\tau(X^{(1)}, X^{\ge 2}) = y - A\tau(X^{(1)}, X^{\ge 2}_\tru) + A\tau(X^{(1)}, X^{\ge 2}_\tru) - A\tau(X^{(1)}, X^{\ge 2}).
\label{eq:amr-d-newres}
\end{equation}

The exact solution for the second block is the oblique projection \eqref{eq:gal_oblique}, hence
$$
y - A\tau(X^{(1)}, X^{\ge 2}_\tru) = (I-A \X_1(\X_1^{\trans} A \X_1)^{-1} \X_1^{\trans}) y.
$$
The last two terms in \eqref{eq:amr-d-newres} are similar to that in Theorem \ref{thm:amen},
$$
A\tau(X^{(1)}, X^{\ge 2}_\tru) - A\tau(X^{(1)}, X^{\ge 2}) = A\X_1 (x^{\ge 2}_\tru - x^{\ge 2}),
$$
but now it is not orthogonal to $(I-A \X_1(\X_1^{\trans} A \X_1)^{-1} \X_1^{\trans})$.
Therefore, we can only use the triangle inequality,
$$
\|y - A\tau(X^{(1)}, X^{\ge 2})\| \le \|(I-A \X_1(\X_1^{\trans} A \X_1)^{-1} \X_1^{\trans}) y\| + \|A\X_1 (x^{\ge 2}_\tru - x^{\ge 2})\|.
$$
The first term is the residual after the Galerkin solution, which is bounded by $\omega_1 \|y-Au\|$.
For the second term, we have the recursion assumption \eqref{eq:amr_res_rec}, that is
$$
\|\X_1^\trans A \X_1 (x^{\ge 2}_\tru - x^{\ge 2})\| \le \Omega \|\X_1^\trans A \X_1 (x^{\ge 2}_\tru - t^{\ge 2})\|.
$$
However, the only way to relate $\|A w\|$ and $\|\X_1^\trans A w\|$ is to use the angle between $\X_1$ and $A\X_1$, employing \eqref{eq:angles},
$$
\|\X_1^\trans A \X_1 (x^{\ge 2}_\tru - x^{\ge 2})\| \ge \sqrt{1-\omega_1^2} \|A \X_1 (x^{\ge 2}_\tru - x^{\ge 2})\|,
$$
$$
\|A \X_1 (x^{\ge 2}_\tru - x^{\ge 2})\| \le \frac{\Omega}{\sqrt{1-\omega_1^2}} \|\X_1^\trans A \X_1 (x^{\ge 2}_\tru - t^{\ge 2})\|.
$$
Since $\X_1^\trans A\X_1 x^{\ge 2}_\tru = \X_1^\trans y$, and $\X_1 t^{\ge 2} = u$, it holds
$$
\|A \X_1 (x^{\ge 2}_\tru - x^{\ge 2})\| \le \frac{\Omega}{\sqrt{1-\omega_1^2}} \|\X_1^\trans z\| \le \frac{\Omega}{\sqrt{1-\omega_1^2}} \|z\|.
$$
Therefore, for the total residual we have
$$
\|y-A\tau(X^{(1)}, X^{\ge 2})\| \le \left(\omega_1 + \frac{1}{\sqrt{1-\omega_1^2}} \Omega\right)\|y-Au\|.
$$
Plugging in the ALS update, the final estimate for \eqref{eq:amr-d-newres} now writes as follows,
$$
\|y-A\tau(X^{(1)}, X^{\ge 2})\| \le \mu_1\left(\omega_1 + \frac{1}{\sqrt{1-\omega_1^2}} \Omega\right)\|y-At\|,
$$
which finishes the recursion.
\end{proof}

Contrarily to the symmetric positive definite case, where the total progress of the AMEn method was deteriorating with $d$, but less than 1 in any case,
here we may have a situation when the progress bound given by Lemma \ref{lem:amr} is greater than 1.
Up to this moment, the only available estimate is $\omega_k \le \omega^F_1$, since we enrich the basis by $Z^{(k)}_k$, i.e. the first Krylov vector only.
In principle, it is possible to include a larger approximate Krylov basis into the enrichment, i.e.
$$
\begin{bmatrix}Z^{(k)}_k & Q^{[1](k)} & \cdots & Q^{[m-1](k)}\end{bmatrix},
$$
where $q^{[p]} = \tau(Q^{[p](k)}, \ldots, Q^{[p](d)}) \approx A_k^p z_k$, $p=1,\ldots,m-1$.
However, this was not found to be reasonable in practical experiments.
In all considered cases, the decays $\omega_k$ and $\mu_k$ provided by the single enrichment $Z_k^{(k)}$ appeared to be sufficiently small to ensure the convergence, fast enough to overcome the work required to prepare several Krylov vectors.

\end{document}